\documentclass[11pt,a4paper,reqno]{amsart}

\usepackage{amssymb,latexsym}
\usepackage{graphicx}
\usepackage{url}		%does nice formatting of URLs

\setlength{ 
	\oddsidemargin}{0.4in}

\setlength{ 
	\evensidemargin}{0.4in}

\setlength{ 
	\textwidth}{5.8in}

\setlength{ 
	\topmargin}{0in}

\setlength{ 
	\textheight}{8.8in}

\theoremstyle{plain}
\numberwithin{equation}{section}
\newtheorem{thm}{Theorem}[section]
\newtheorem{theorem}[thm]{Theorem}
\newtheorem{lemma}[thm]{Lemma}

\newtheorem{corollary}[thm]{Corollary}

\begin{document}
	
	\title{Explicit Formulas for the $p$-adic Valuations of Fibonomial Coefficients II}
	\author{Phakhinkon Phunphayap}
	\address{Department of Mathematics, Faculty of Science\\
		Silpakorn University\\
		Ratchamankanai Rd, Nakhon Pathom\\
		Thailand, 73000}
	\email{phakhinkon@gmail.com}
	\author{Prapanpong Pongsriiam\textsuperscript{1}}
	\address{Department of Mathematics, Faculty of Science\\
		Silpakorn University\\
		Ratchamankanai Rd, Nakhon Pathom\\
		Thailand, 73000}
	\email{prapanpong@gmail.com, pongsriiam\_p@silpakorn.edu}
	\thanks{\textsuperscript{1}Prapanpong Pongsriiam is the corresponding author.}
	
	\begin{abstract}
		In this article, we give explicit formulas for the $p$-adic valuations of the Fibonomial coefficients ${p^a n \choose n}_F$ for all primes $p$ and positive integers $a$ and $n$. This is a continuation from our previous article extending some results in the literature, which deal only with $p = 2,3,5,7$ and $a = 1$. Then we use these formulas to characterize the positive integers $n$ such that ${pn \choose n}_F$ is divisible by $p$, where $p$ is any prime which is congruent to $\pm 2 \pmod{5}$.
	\end{abstract}
	
	\maketitle
	
	\section{Introduction}
	
	The \textit{Fibonacci sequence} $(F_n)_{n\geq 1}$ is given by the recurrence relation $F_n = F_{n-1} + F_{n-2}$ for $n\geq 3$ with the initial values $F_1 = F_2 = 1$. For each $m\geq 1$ and $1\leq k \leq m$, the \textit{Fibonomial coefficients} ${m \choose k}_F$ is defined by
	\begin{equation*}
		{m \choose k}_F = \frac{F_1F_2F_3\cdots F_m}{(F_1F_2F_3\cdots F_k)(F_1F_2F_3\cdots F_{m-k})} = \frac{F_{m-k+1}F_{m-k+2}\cdots F_m}{F_1F_2F_3\cdots F_k}.
	\end{equation*}
	Similar to the binomial coefficients, we define ${m \choose k}_F=1$ if $k=0$ and ${m \choose k}_F=0$ if $k>m$, and it is well-known that ${m \choose k}_F$ is always an integer for every $m \geq 1$ and $k\geq 0$.
	
	Recently, there has been an interest in the study of Fibonomial coefficients. For example, Marques and Trojovsk\'y \cite{Mar1,Mar2} determine the integers $n\geq 1$ such that ${pn \choose n}_F$ is divisible by $p$ for $p=2,3$. Then Ballot \cite[Theorem 2]{ballot} extends the Kummer-like theorem of Knuth and Wilf \cite{Knuth} and uses it to generalize the results of Marques and Trojovsk\'{y}. In particular, Ballot \cite[Theorems 3.6, 5.2, and 5.3]{ballot} finds all integers $n$ such that $p\mid {pn \choose n}_U$ for any nondegenerate fundamental Lucas sequence $U$ and $p = 2,3$ and for $p=5,7$ in the case $U=F$. For other recent results on the divisibility properties of the Fibonacci numbers, the Fibonomial or Lucasnomial coefficients, and other generalizations of binomial coefficients, see for example in \cite{ballot2,ballot3,ballot4,KP2,KP3,OP,OP3,P,P2,P3}. Hence the relation $p \mid {p^a n \choose n}_F$ has been studied only in the case $p = 2,3,5,7$ and $a=1$. In this article, we extend the investigation on ${p^a n \choose n}_F$ to the case of any prime $p$ and any positive integer $a$. To obtain such the general result for all $p$ and $a$, the calculation is inevitably long but we try to make it as simple as possible. As a reward, we can easily show in Corollaries \ref{corollary1} and \ref{corollary2} that ${4n \choose n}_F$ is odd if and only if $n$ is a nonnegative power of 2, and ${8n \choose n}_F$ is odd if and only if $n = (1 + 3 \cdot 2^{k})/7$ for some $k \equiv 1 \pmod{3}$.
	
	We organize this article as follows. In Section \ref{sec2}, we give some preliminaries and results which are needed in the proof of the main theorems. In Section \ref{sec3}, we calculate the $p$-adic valuation of ${p^a n \choose n}_F$ for all $a$, $p$, and $n$, and use it to give a characterization of the positive integers $n$ such that ${p^a n \choose n}_F$ is divisible by $p$ where $p$ is any prime which is congruent to $\pm 2 \pmod{5}$. Remark that there also is an interesting pattern in the $p$-adic representation of the integers $n$ such that ${pn \choose n}_F$ is divisible by $p$. The proof is being prepared but it is a bit too long to include in this paper. We are trying to make it simpler and shorter and will publish it in the future. For more information and some recent articles related to the Fibonacci numbers, we refer the readers to \cite{PP,P4,P5,P6} and references therein.
	
	\section{Preliminaries and Lemmas}\label{sec2}
	
	Throughout this article, unless stated otherwise, $x$ is a real number, $p$ is a prime, $a,b,k,m,n,q$ are integers, $m,n \geq 1$, and $q \geq 2$. The \textit{$p$-adic valuation (or $p$-adic order)} of $n$, denoted by $\nu_p (n)$, is the exponent of $p$ in the prime factorization of $n$. In addition, the \textit{order (or the rank) of appearance} of $n$ in the Fibonacci sequence, denoted by $z(n)$, is the smallest positive integer $m$ such that $n\mid F_m$, $\lfloor x\rfloor$ is the largest integer less than or equal to $x$, $\{ x\}$ is the \textit{fractional part} of $x$ given by $\{ x\}=x-\lfloor x\rfloor$, $\lceil x\rceil$ is the smallest integer larger than or equal to $x$, and $a\bmod{m}$ is the least nonnegative residue of $a$ modulo $m$. Furthermore, for a mathematical statement $P$, the Iverson notation $[P]$ is defined by
	\begin{equation*}
		[P] = 
		\begin{cases}
			1, & \text{if $P$ holds}; \\
			0, & \text{otherwise}.
		\end{cases}
	\end{equation*}
	We define $s_q (n)$ to be the sum of digits of $n$ when $n$ is written in base $q$, that is, if $n = (a_k a_{k-1} \ldots a_0)_q = a_k q^k + a_{k-1} q^{k-1} + \cdots + a_0$ where $0\leq a_i < q$ for every $i$, then $s_q (n) = a_k + a_{k-1} + \cdots + a_0$. Next, we recall some well-known and useful results for the reader's convenience.
	\begin{lemma}\label{lemma1}
		Let $p\neq 5$ be a prime. Then the following statements hold.
		\begin{itemize}
			\item[(i)] $n\mid F_m$ if and only if $z(n)\mid m$
			\item[(ii)] $z(p)\mid p+1$ if and only if $p\equiv \pm 2 \pmod{5}$ and $z(p)\mid p-1$, otherwise.
			\item[(iii)] $\gcd (z(p),p)=1$.
		\end{itemize}
	\end{lemma}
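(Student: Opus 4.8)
\emph{Proof sketch.} The three parts are logically nested, so the plan is to establish them in the given order. For part (i) the engine is the classical strong divisibility property $\gcd(F_m,F_n)=F_{\gcd(m,n)}$ (itself a short induction from $F_{m+n}=F_mF_{n+1}+F_{m-1}F_n$, or simply quotable). Granting this, the ``if'' direction is immediate: if $z(n)\mid m$, say $m=kz(n)$, then $F_{z(n)}\mid F_m$ since $z(n)\mid m$, and $n\mid F_{z(n)}$ by the definition of $z(n)$, so $n\mid F_m$. For the ``only if'' direction, suppose $n\mid F_m$ and set $d=\gcd(m,z(n))$; then $n$ divides both $F_m$ and $F_{z(n)}$, hence $n\mid\gcd(F_m,F_{z(n)})=F_d$, and the minimality of $z(n)$ forces $d\geq z(n)$, so $d=z(n)$ and $z(n)\mid m$.

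For part (ii) the key input is the \emph{law of apparition} for the Fibonacci numbers. I would prove it by working modulo $p$ with the roots $\alpha,\beta=(1\pm\sqrt5)/2$ of $x^2-x-1$ together with Binet's formula $F_n=(\alpha^n-\beta^n)/(\alpha-\beta)$, which is legitimate in $\mathbb{F}_p$ (or in $\mathbb{F}_{p^2}$) as soon as $p\neq 2,5$, since then $2$ and $5$ are units and $\alpha-\beta=\sqrt5\neq 0$; the case $p=2$ is checked by hand, $z(2)=3$, which divides $2+1$, consistently with $2\equiv 2\pmod 5$. If $\left(\frac5p\right)=1$ then $\sqrt5\in\mathbb{F}_p$, so $\alpha,\beta\in\mathbb{F}_p^{\times}$ and Fermat gives $\alpha^{p-1}=\beta^{p-1}=1$, whence $F_{p-1}\equiv 0\pmod p$ and, by part (i), $z(p)\mid p-1$. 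If $\left(\frac5p\right)=-1$ then $\alpha,\beta\in\mathbb{F}_{p^2}\setminus\mathbb{F}_p$ and the Frobenius $x\mapsto x^p$ sends $\sqrt5$ to $-\sqrt5$, hence interchanges $\alpha$ and $\beta$; since $\alpha\beta=-1$, we get $\alpha^{p+1}=\alpha^p\alpha=\beta\alpha=-1=\beta^{p+1}$, so $F_{p+1}\equiv 0\pmod p$ and, by part (i), $z(p)\mid p+1$. It remains to translate the Legendre symbol into a congruence for $p$ modulo $5$: since $5\equiv 1\pmod 4$, quadratic reciprocity gives $\left(\frac5p\right)=\left(\frac p5\right)$, and the nonzero squares modulo $5$ are exactly $\{1,4\}=\{\pm1\}$, so $\left(\frac5p\right)=1$ iff $p\equiv\pm1\pmod 5$ and $\left(\frac5p\right)=-1$ iff $p\equiv\pm2\pmod 5$. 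This is precisely the stated dichotomy.

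Part (iii) is then a one-liner: by part (ii), $z(p)$ divides $p-1$ or $p+1$, and since $\gcd(p,p-1)=\gcd(p,p+1)=1$, any common divisor of $z(p)$ and $p$ must divide $1$; hence $\gcd(z(p),p)=1$. (The hypothesis $p\neq 5$ is essential here, as $z(5)=5$.) The only step that is not pure bookkeeping is the law of apparition in part (ii); the one place to be careful is the legitimacy of Binet's formula modulo $p$ — adjoining $\sqrt5$, excluding $p=5$ so that $\alpha\neq\beta$, and disposing of $p=2$ separately — together with the identification of the Frobenius action in the non-residue case. Since all three statements are standard facts about the rank of appearance, an acceptable alternative is simply to cite a standard reference; but I would prefer to record the short self-contained argument above.
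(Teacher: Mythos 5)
Your argument is mathematically sound, but be aware that the paper does not actually prove this lemma: the authors simply record the three statements as well known and cite \cite[Lemma 1]{PP}. So there is nothing in the paper to compare against step by step; what you have written is the standard self-contained proof --- strong divisibility $\gcd(F_m,F_n)=F_{\gcd(m,n)}$ for (i), Binet's formula over $\mathbb{F}_{p^2}$ together with the Frobenius action and quadratic reciprocity for (ii), and the divisibility $z(p)\mid p\pm 1$ for (iii) --- and each step you give is correct, including the separate treatment of $p=2$ and the exclusion of $p=5$. One small point to tighten: part (ii) is stated as a biconditional, while your argument only establishes the two forward implications ($p\equiv\pm 2\pmod 5\Rightarrow z(p)\mid p+1$ and $p\equiv\pm 1\pmod 5\Rightarrow z(p)\mid p-1$). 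To close the equivalence you must rule out the possibility that $z(p)$ divides both $p-1$ and $p+1$; any such common divisor divides $2$, and since $F_1=F_2=1$ no prime has $z(p)\leq 2$, so this cannot occur. With that one-line addition the biconditional follows, and in any case the two implications you do prove are exactly the directions the paper later uses (e.g.\ to conclude $p^a\equiv 1$ or $p\equiv -1\pmod{z(p)}$ in Theorem \ref{maintheorem1}).
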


	\begin{proof}
		These are well-known. See, for example, in \cite[Lemma 1]{PP} for more details.
	\end{proof}

	\begin{lemma}\label{Legendre}
		{\rm(Legendre's formula)}
		Let $n$ be a positive integer and let $p$ be a prime. Then
		\begin{equation*}
			\nu_p(n!)=\sum_{k=1}^\infty \left\lfloor\frac{n}{p^k}\right\rfloor = \frac{n - s_p (n)}{p-1}.
		\end{equation*}
	\end{lemma}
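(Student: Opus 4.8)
The plan is to establish the two equalities in turn. For the first, I would start from the factorization $n! = 1\cdot 2\cdots n$ and use additivity of the $p$-adic valuation to write $\nu_p(n!) = \sum_{j=1}^{n} \nu_p(j)$. Then I would rewrite each term via the indicator identity $\nu_p(j) = \sum_{k\ge 1}[\,p^k \mid j\,]$, which is really a finite sum since $p^k > j$ kills all but finitely many terms. Interchanging the two summations---legitimate because every term is a nonnegative integer---gives $\nu_p(n!) = \sum_{k\ge 1} \#\{\,1\le j\le n : p^k \mid j\,\} = \sum_{k\ge 1}\lfloor n/p^k\rfloor$, and this series collapses to a finite sum because $\lfloor n/p^k\rfloor = 0$ as soon as $p^k > n$.

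For the second equality I would pass to the base-$p$ expansion $n = (a_m a_{m-1}\cdots a_0)_p = \sum_{i=0}^{m} a_i p^i$ with $0\le a_i < p$. The key elementary observation is that $\lfloor n/p^k\rfloor = \sum_{i\ge k} a_i p^{i-k}$ for each $k\ge 0$, which holds because the discarded lower digits satisfy $\sum_{i<k} a_i p^i \le (p-1)(1+p+\cdots+p^{k-1}) = p^k - 1 < p^k$. Summing over $k\ge 1$, reversing the order of summation, and evaluating the inner geometric series yields
\[
\sum_{k=1}^{\infty}\left\lfloor\frac{n}{p^k}\right\rfloor = \sum_{i=1}^{m} a_i \sum_{k=1}^{i} p^{i-k} = \sum_{i=1}^{m} a_i\cdot\frac{p^i-1}{p-1}.
\]
Since the $i=0$ term contributes $a_0(p^0-1)/(p-1) = 0$, I may include it and rewrite the right-hand side as $\frac{1}{p-1}\bigl(\sum_{i=0}^{m} a_i p^i - \sum_{i=0}^{m} a_i\bigr) = \frac{n - s_p(n)}{p-1}$, as desired.

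There is no serious obstacle here---this is a classical result of Legendre---so the ``hard part'' is merely making the two bookkeeping steps airtight: justifying the interchange of the double sum (immediate, since all terms are nonnegative) and verifying the floor identity $\lfloor n/p^k\rfloor = \sum_{i\ge k} a_i p^{i-k}$ from the digit bound above. An alternative route to the first equality is a short induction on $n$, splitting according to whether $p\mid n$, but the double-counting argument is cleaner and feeds directly into the base-$p$ computation needed for the second equality.
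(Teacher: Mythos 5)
Your proof is correct: both the double-counting argument for $\nu_p(n!)=\sum_{k\ge 1}\lfloor n/p^k\rfloor$ and the base-$p$ digit computation for the identity $\sum_{k\ge 1}\lfloor n/p^k\rfloor=\frac{n-s_p(n)}{p-1}$ are the standard, airtight arguments, and the interchange of summation is justified as you say. The paper itself states this lemma as classical and gives no proof, so there is nothing to compare against; your write-up is exactly the textbook derivation one would supply.
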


	We will deal with a lot of calculations involving the floor function. So we recall the following results, which will be used throughout this article, sometimes without reference.
	
	\begin{lemma}\label{floorlemma}
		For $k\in\mathbb{Z}$ and $x\in\mathbb{R}$, the following holds
		\begin{itemize}
			\item[(i)] $\lfloor k+x\rfloor = k + \lfloor x\rfloor$,
			\item[(ii)] $\{k+x\}=\{x\}$,
			\item[(iii)] $\lfloor x\rfloor + \lfloor -x \rfloor=
			\begin{cases}
			-1, &\text{if $x\not\in\mathbb{Z}$;}\\
			0, &\text{if $x\in\mathbb{Z}$,}
			\end{cases}$
			\item[(iv)] $0 \leq \{ x \} < 1$ and $\{ x \} = 0$ if and only if $x \in \mathbb{Z}$.
			\item[(v)] $\lfloor x+y\rfloor =
			\begin{cases}
			\lfloor x\rfloor +\lfloor y\rfloor, &\text{if $\{x\}+\{y\}<1$;}\\
			\lfloor x\rfloor +\lfloor y\rfloor+1, &\text{if $\{x\}+\{y\}\geq 1$,}
			\end{cases}$
			\item[(vi)] $\left\lfloor\frac{\lfloor x\rfloor}{k}\right\rfloor = \left\lfloor\frac{x}{k}\right\rfloor$ for $k\geq 1$.
		\end{itemize}
	\end{lemma}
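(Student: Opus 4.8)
The plan is to derive all six parts from the single structural fact that every real $x$ has a unique representation $x = \lfloor x\rfloor + \{x\}$ with $\lfloor x\rfloor \in \mathbb{Z}$ and $0 \le \{x\} < 1$; equivalently, if $x = n + t$ with $n \in \mathbb{Z}$ and $0 \le t < 1$, then necessarily $n = \lfloor x\rfloor$ and $t = \{x\}$. Part (iv) is immediate: the inequality $0 \le \{x\} < 1$ is just a restatement of $\lfloor x\rfloor \le x < \lfloor x\rfloor + 1$, and $\{x\} = 0$ holds exactly when $x = \lfloor x\rfloor$, i.e.\ when $x \in \mathbb{Z}$.

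For (i), I would write $k + x = (k + \lfloor x\rfloor) + \{x\}$; since $k + \lfloor x\rfloor \in \mathbb{Z}$ and $0 \le \{x\} < 1$, uniqueness forces $\lfloor k+x\rfloor = k + \lfloor x\rfloor$. Then (ii) follows by subtracting: $\{k+x\} = (k+x) - \lfloor k+x\rfloor = x - \lfloor x\rfloor = \{x\}$. For (v), writing $x + y = (\lfloor x\rfloor + \lfloor y\rfloor) + (\{x\} + \{y\})$ and noting $0 \le \{x\} + \{y\} < 2$, I split into two cases: if $\{x\}+\{y\} < 1$ the displayed expression is already the canonical decomposition, while if $\{x\}+\{y\} \ge 1$ then $0 \le \{x\}+\{y\} - 1 < 1$, so $x + y = (\lfloor x\rfloor + \lfloor y\rfloor + 1) + (\{x\}+\{y\}-1)$ is; uniqueness then gives the two stated values. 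Part (iii) is essentially the case $y = -x$ of this reasoning: if $x \in \mathbb{Z}$ both floors are exact and the sum is $0$; if $x \notin \mathbb{Z}$ then $0 < \{x\} < 1$ and $-x = (-\lfloor x\rfloor - 1) + (1 - \{x\})$ with $0 < 1 - \{x\} < 1$, so $\lfloor -x\rfloor = -\lfloor x\rfloor - 1$ and the sum is $-1$.

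For (vi), with $k \ge 1$ I would invoke the division algorithm to write $\lfloor x\rfloor = kq + r$ with $q \in \mathbb{Z}$ and $0 \le r \le k-1$, so that $q = \lfloor \lfloor x\rfloor / k\rfloor$ by definition. Then $x = kq + (r + \{x\})$, and since $0 \le r + \{x\} < (k-1) + 1 = k$, dividing by $k$ gives $x/k = q + (r+\{x\})/k$ with $0 \le (r+\{x\})/k < 1$; uniqueness yields $\lfloor x/k\rfloor = q = \lfloor \lfloor x\rfloor / k\rfloor$.

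None of the six parts presents a genuine obstacle; the only place requiring a little care is (vi), where one must keep the bound $r \le k-1$ sharp so that $r + \{x\} < k$ and avoid treating $r + \{x\}$ as if it were an integer. Everything else is a direct appeal to the uniqueness of the integer-plus-fractional-part decomposition, and the parts can be presented in the order (iv), (i), (ii), (v), (iii), (vi) so that each uses only what precedes it.
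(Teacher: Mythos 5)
Your proof is correct in all six parts. Note that the paper itself does not prove this lemma: it simply declares the statements well known and points to Chapter 3 of Graham--Knuth--Patashnik, so there is no "paper's approach" to compare against. Your unified strategy --- everything reduced to the uniqueness of the decomposition $x = \lfloor x\rfloor + \{x\}$ with $\lfloor x\rfloor \in \mathbb{Z}$ and $0 \le \{x\} < 1$, plus the division algorithm for (vi) --- is the standard argument and is carried out without gaps; in particular you correctly handle the one delicate point in (vi), namely that $r + \{x\} < k$ requires the sharp bound $r \le k-1$ rather than $r < k$ as a real number.
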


	\begin{proof}
		These are well-known and can be proved easily. For more details, see in \cite[Chapter 3]{Concrete}. We also refer the reader to \cite{OP2} for a nice application of (vi).
	\end{proof}

	The next three theorems given by Phunphayap and Pongsriiam \cite{PP} are important tools for obtaining the main results of this article.
	
	\begin{theorem}\label{theorem1}
		\cite[Theorem 7]{PP} Let $p$ be a prime, $a\geq 0$, $\ell \geq 0$, and $m\geq 1$. Assume that $p\equiv \pm 1\pmod{m}$ and $\delta = [\ell \not\equiv 0\pmod{m}]$ is the Iverson notation. Then
		\begin{equation*}
		\nu_p \left(\left\lfloor\frac{\ell p^a}{m}\right\rfloor !\right) =
		\begin{cases}
		\frac{\ell (p^a -1)}{m(p-1)}-a\left\{\frac{\ell}{m}\right\} + \nu_p\left(\left\lfloor\frac{\ell}{m}\right\rfloor!\right), &\text{if $p\equiv 1 \pmod{m}$;}\\
		\frac{\ell (p^a -1)}{m(p-1)} - \frac{a}{2}\delta + \nu_p\left(\left\lfloor\frac{\ell}{m}\right\rfloor!\right), &\text{if $p\equiv -1 \pmod{m}$ and $a$ is even;}\\
		\frac{\ell (p^a -1)}{m(p-1)} - \frac{a-1}{2}\delta - \left\{\frac{\ell}{m}\right\} + \nu_p\left(\left\lfloor\frac{\ell}{m}\right\rfloor!\right), &\text{if $p\equiv -1 \pmod{m}$ and $a$ is odd.}
		\end{cases}
		\end{equation*}
	\end{theorem}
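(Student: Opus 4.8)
The plan is to reduce everything to Legendre's formula (Lemma~\ref{Legendre}) together with the elementary floor identities of Lemma~\ref{floorlemma}. Write $N=\left\lfloor\frac{\ell p^a}{m}\right\rfloor$, so that $\nu_p(N!)=\sum_{k\geq 1}\left\lfloor N/p^k\right\rfloor$. By Lemma~\ref{floorlemma}(vi), for $1\leq k\leq a$ one has $\left\lfloor N/p^k\right\rfloor=\left\lfloor \frac{\ell p^{a}}{mp^k}\right\rfloor=\left\lfloor\frac{\ell p^{a-k}}{m}\right\rfloor$, while for $k>a$ one has $\left\lfloor N/p^k\right\rfloor=\left\lfloor\frac{\ell}{mp^{k-a}}\right\rfloor=\left\lfloor\frac{\lfloor\ell/m\rfloor}{p^{k-a}}\right\rfloor$. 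Summing the second family over $k>a$ gives $\nu_p(\lfloor\ell/m\rfloor!)$ by Legendre's formula again, which is exactly the last summand of each claimed expression. Hence everything reduces to evaluating the finite sum
\[
S:=\sum_{j=0}^{a-1}\left\lfloor\frac{\ell p^{j}}{m}\right\rfloor .
\]

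Next I would separate integer and fractional parts: since $\left\lfloor\frac{\ell p^j}{m}\right\rfloor=\frac{\ell p^j}{m}-\left\{\frac{\ell p^j}{m}\right\}$ and $\sum_{j=0}^{a-1}p^j=\frac{p^a-1}{p-1}$, we obtain $S=\frac{\ell(p^a-1)}{m(p-1)}-T$, where $T:=\sum_{j=0}^{a-1}\left\{\frac{\ell p^j}{m}\right\}$. This is where the hypothesis $p\equiv\pm 1\pmod m$ enters: $\ell p^j\equiv\ell(\pm 1)^j\pmod m$, so $\frac{\ell p^j}{m}$ differs from $\frac{\ell}{m}$ (respectively from $\frac{-\ell}{m}$) by an integer, and therefore, by Lemma~\ref{floorlemma}(ii), $\left\{\frac{\ell p^j}{m}\right\}$ equals $\left\{\frac{\ell}{m}\right\}$ when $p\equiv 1\pmod m$, or when $p\equiv -1\pmod m$ and $j$ is even, and equals $\left\{\frac{-\ell}{m}\right\}$ when $p\equiv -1\pmod m$ and $j$ is odd. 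Combining this with $\left\{\frac{\ell}{m}\right\}+\left\{\frac{-\ell}{m}\right\}=[\,\ell\not\equiv 0\pmod m\,]=\delta$, an immediate consequence of Lemma~\ref{floorlemma}(iii), the evaluation of $T$ becomes just a count of how many indices $j\in\{0,1,\dots,a-1\}$ are even and how many are odd.

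Finally I would split into the three stated cases. If $p\equiv 1\pmod m$ then every term of $T$ equals $\left\{\frac{\ell}{m}\right\}$, so $T=a\left\{\frac{\ell}{m}\right\}$. If $p\equiv -1\pmod m$ and $a$ is even, there are $a/2$ even and $a/2$ odd indices, so $T=\frac{a}{2}\left(\left\{\frac{\ell}{m}\right\}+\left\{\frac{-\ell}{m}\right\}\right)=\frac{a}{2}\delta$. If $p\equiv -1\pmod m$ and $a$ is odd, there are $(a+1)/2$ even and $(a-1)/2$ odd indices, so $T=\frac{a-1}{2}\left(\left\{\frac{\ell}{m}\right\}+\left\{\frac{-\ell}{m}\right\}\right)+\left\{\frac{\ell}{m}\right\}=\frac{a-1}{2}\delta+\left\{\frac{\ell}{m}\right\}$. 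Substituting each value of $T$ into $S=\frac{\ell(p^a-1)}{m(p-1)}-T$ and then adding $\nu_p(\lfloor\ell/m\rfloor!)$ produces precisely the three formulas in the statement.

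I do not expect a genuine obstacle; the argument is essentially careful bookkeeping. The two places that need attention are (i) the truncation of the infinite Legendre sum at $k=a$ and the repeated, correctly oriented use of Lemma~\ref{floorlemma}(vi) to push floors through the divisions by powers of $p$, and (ii) the parity count of the indices $j$, where an off-by-one error would exactly manufacture or destroy the extra $\left\{\frac{\ell}{m}\right\}$ that distinguishes the odd-$a$ case from the even-$a$ case. I would also record the degenerate situations $a=0$ (empty sum $S=0$) and $m\mid\ell$ (then $\delta=0$ and every fractional part vanishes), in which the three formulas hold trivially.
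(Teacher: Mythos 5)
Your proof is correct. Note that the paper does not actually prove this statement --- it is imported verbatim as \cite[Theorem 7]{PP} --- so there is no in-paper argument to compare against; what you have written is a complete, self-contained derivation. The reduction of $\nu_p\bigl(\lfloor \ell p^a/m\rfloor!\bigr)$ via Legendre's formula and Lemma \ref{floorlemma}(vi) to $\nu_p\bigl(\lfloor \ell/m\rfloor!\bigr)$ plus the finite sum $S=\sum_{j=0}^{a-1}\lfloor \ell p^j/m\rfloor$, followed by the identity $\{\ell/m\}+\{-\ell/m\}=\delta$ and the parity count of the indices $j$, is exactly the natural route (and, in substance, the one used in the cited source). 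All the delicate points you flag --- the truncation at $k=a$, the orientation of Lemma \ref{floorlemma}(vi), and the $(a+1)/2$ versus $(a-1)/2$ count in the odd case --- are handled correctly, and the degenerate cases $a=0$, $\ell=0$, and $m\mid\ell$ check out.
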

	
	\begin{theorem}\label{theorem2}
		\cite[Theorem 11 and Corollary 12]{PP} Let $0\leq k\leq m$ be integers. Then the following statements hold.
			\begin{itemize}
				\item[(i)] Let $A_2 = \nu_2\left(\left\lfloor \frac{m}{6} \right\rfloor!\right) - \nu_2\left(\left\lfloor \frac{k}{6} \right\rfloor!\right) -\nu_2\left(\left\lfloor \frac{m-k}{6} \right\rfloor!\right)$. If $r = m \bmod{6}$ and $s = k \bmod{6}$, then
				\begin{equation*}
 					\nu_2\left({m \choose k}_F\right) = 
					\begin{cases}
					A_2, &\text{if $r\geq s$ and $(r,s)\neq (3,1),(3,2),(4,2)$;}\\
					A_2 + 1, &\text{if $(r,s) = (3,1),(3,2),(4,2)$;}\\
					A_2+3, &\text{if $r<s$ and $(r,s)\neq (0,3),(1,3),(2,3)$,}\\
					&(1,4),(2,4),(2,5);\\
					A_2 + 2, &\text{if $(r,s)= (0,3),(1,3),(2,3),(1,4),(2,4)$,}\\
					& (2,5).
					\end{cases}
				\end{equation*}
				\item[(ii)] $\nu_5\left({m \choose k}_F\right) = \nu_5\left({m \choose k}\right)$.
				\item[(iii)] Suppose that $p$ is a prime, $p\neq 2$, and $p\neq 5$. If $m'=\left\lfloor\frac{m}{z(p)}\right\rfloor$, $k'=\left\lfloor\frac{k}{z(p)}\right\rfloor$, $r=m\bmod z(p)$, and $s=k\bmod z(p)$, then
				\begin{align*}
					\nu_p\left({m \choose k}_F\right) &=  \nu_p \left({m' \choose k'}\right) + [r<s]\left (\nu_p\left(\left\lfloor\frac{m-k+z(p)}{z(p)}\right\rfloor\right) +\nu_p(F_{z(p)})\right ).
				\end{align*}
			\end{itemize}
	\end{theorem}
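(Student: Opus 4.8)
The plan is to reduce everything to sums of $p$-adic valuations of Fibonacci numbers. Expanding the definition of the Fibonomial coefficient gives
\[
\nu_p\!\left({m \choose k}_F\right) = S_p(m) - S_p(k) - S_p(m-k), \qquad \text{where } S_p(N) := \sum_{j=1}^{N}\nu_p(F_j),
\]
so the whole task is to produce a clean closed form for $S_p(N)$, and the three-way split in the statement is precisely the one forced by the behaviour of $\nu_p(F_j)$. Write $z = z(p)$. By the classical formula for the $p$-adic valuation of Fibonacci numbers: if $p$ is odd and $p\neq 5$, then $\nu_p(F_j) = \big(\nu_p(F_z) + \nu_p(j)\big)[z\mid j]$; if $p = 5$, then $\nu_5(F_j) = \nu_5(j)$; and if $p = 2$, then $\nu_2(F_j)$ equals $0$ for $3\nmid j$, equals $1$ for $j\equiv 3\pmod 6$, and equals $\nu_2(j)+2$ for $6\mid j$. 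In the first case Lemma \ref{lemma1}(iii), which says $\gcd(z,p)=1$, lets us sum the contributions over $j = z,2z,\dots,\lfloor N/z\rfloor z$ and apply Lemma \ref{Legendre} to obtain
\[
S_p(N) = \left\lfloor\tfrac{N}{z}\right\rfloor \nu_p(F_z) + \nu_p\!\left(\left\lfloor\tfrac{N}{z}\right\rfloor!\right)\qquad(p\neq 2,5);
\]
in the case $p=5$ the same bookkeeping with $z=5$ collapses to $S_5(N) = \nu_5(N!)$, which gives part (ii) at once.

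For part (iii) I would substitute the formula for $S_p$ into the identity above, put $m' = \lfloor m/z\rfloor$, $k' = \lfloor k/z\rfloor$, $r = m\bmod z$, $s = k\bmod z$, and split on whether $r\geq s$. Lemma \ref{floorlemma}(v) gives $\lfloor m/z\rfloor - \lfloor k/z\rfloor - \lfloor (m-k)/z\rfloor = [r<s]$, so the $\nu_p(F_z)$-terms contribute exactly $[r<s]\,\nu_p(F_z)$. If $r\geq s$ then $\lfloor(m-k)/z\rfloor = m'-k'$, so by Lemma \ref{Legendre} the three factorial terms assemble into $\nu_p\big({m'\choose k'}\big)$ and the formula holds with $[r<s]=0$. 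If $r<s$ then $\lfloor(m-k)/z\rfloor = m'-k'-1$, so the factorial terms give $\nu_p\big({m'\choose k'}\big) + \nu_p(m'-k')$; it then remains only to note that $m'-k' = \big\lfloor\tfrac{m-k+z}{z}\big\rfloor$ in this case, which follows from $m-k\equiv r-s\pmod z$ together with $1\leq r-s+z\leq z-1$. This is exactly the claimed identity.

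For part (i) the strategy is identical, but computing $S_2(N)$ is where the real work sits. Separating the two kinds of contributing indices and using Lemma \ref{Legendre} yields
\[
S_2(N) = \#\{j\leq N : j\equiv 3\pmod 6\} + 3\left\lfloor\tfrac{N}{6}\right\rfloor + \nu_2\!\left(\left\lfloor\tfrac{N}{6}\right\rfloor!\right),
\]
and since $\#\{j\leq N : j\equiv 3\pmod 6\} = \lfloor N/6\rfloor + [N\bmod 6 \geq 3]$, the quantity $\nu_2\big({m\choose k}_F\big) - A_2$ reduces to a function of the residues $r = m\bmod 6$, $s = k\bmod 6$, $t = (m-k)\bmod 6$ alone; one finds that it equals $4[r<s] + [r\geq 3] - [s\geq 3] - [t\geq 3]$, where $t$ is determined by $r$ and $s$ through $t\equiv r-s\pmod 6$. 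The last step is then a finite check: run through the residue pairs $(r,s)$, evaluate this expression, and read off the four values $0,1,2,3$ that appear in the table, together with the exceptional pairs. I expect this residue bookkeeping for $p=2$ to be the only genuinely delicate point: parts (ii) and (iii) are essentially forced once $S_p$ is in hand, whereas part (i) is routine only if one is careful enough to pin down the exceptional pairs correctly. (An alternative would be to feed the same information about $\nu_p(F_j)$ into the Kummer-type theorem for Fibonomials mentioned in connection with \cite{ballot}, but the direct summation above seems the most transparent.)
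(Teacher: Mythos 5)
This theorem is not proved in the paper at all: it is imported verbatim from \cite{PP} (Theorem 11 and Corollary 12 there), so there is no internal proof to compare against. Your derivation is correct and is essentially the argument of that reference: write $\nu_p\bigl(\binom{m}{k}_F\bigr)=S_p(m)-S_p(k)-S_p(m-k)$ with $S_p(N)=\sum_{j\le N}\nu_p(F_j)$, evaluate $S_p$ via Lengyel's formula for $\nu_p(F_j)$ together with Legendre's formula, and convert the floor differences into the carry indicator $[r<s]$ via Lemma \ref{floorlemma}(v). I checked the delicate part: $S_2(N)=4\lfloor N/6\rfloor+[N\bmod 6\ge 3]+\nu_2(\lfloor N/6\rfloor!)$ is right, your expression $4[r<s]+[r\ge 3]-[s\ge 3]-[t\ge 3]$ with $t\equiv r-s\pmod 6$ does reproduce exactly the four cases and the exceptional pairs of the table in (i), and in (iii) the identity $m'-k'=\lfloor(m-k+z(p))/z(p)\rfloor$ in the $r<s$ case closes the argument. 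The only thing you should make explicit is a citation for Lengyel's valuation formula for $\nu_p(F_n)$, since it is used as the engine of the whole proof but is not stated anywhere in this paper.
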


	\begin{theorem}\label{theorem3}
		\cite[Theorem 13]{PP} Let $a$, $b$, $\ell_1$, and $\ell_2$ be positive integers and $b\geq a$. For each $p \neq 5$, assume that $\ell_1 p^b >\ell_2 p^a$ and let $m_p = \left\lfloor\frac{\ell_1 p^{b-a}}{z(p)}\right\rfloor$ and $k_p =\left\lfloor\frac{\ell_2}{z(p)}\right\rfloor$. Then the following statements hold.
		\begin{itemize}
			\item[(i)] If $a\equiv b \pmod{2}$, then $\nu_2\left( {\ell_1 2^b \choose \ell_2 2^a}_F \right)$ is equal to
			\begin{equation*}
			\begin{cases}
			\nu_2 \left( {m_2 \choose k_2} \right), &\text{if $\ell_1 \equiv \ell_2 \pmod{3}$ or $\ell_2 \equiv 0 \pmod{3}$;}\\
			a + 2 + \nu_2 \left( m_2	- 	k_2 \right) + \nu_2 \left( {m_2 \choose k_2} \right), &\text{if $\ell_1 \equiv 
				0 \pmod{3}$ and $\ell_2 \not\equiv 
				0 \pmod{3}$;}\\
			\left\lceil \frac{a}{2} \right\rceil + 1 + \nu_2 \left( m_2	- k_2 \right) + \nu_2 \left( { m_2 \choose  k_2} \right), &\text{if $\ell_1 \equiv 1 \pmod{3}$ and $\ell_2 \equiv 2 \pmod{3}$;}\\
			\left\lceil\frac{a+1}{2}\right\rceil + \nu_2 \left( {m_2 \choose k_2} \right), &\text{if $\ell_1 \equiv 2 \pmod{3}$ and $\ell_2 \equiv 1 \pmod{3}$,}
			\end{cases}
			\end{equation*}
			and if $a\not\equiv b \pmod{2}$, then $\nu_2\left( {\ell_1 2^b \choose \ell_2 2^a}_F \right)$ is equal to
			\begin{equation*}
			\begin{cases}
			\nu_2 \left( {m_2 \choose k_2} \right), &\text{if $\ell_1 \equiv  -\ell_2 \pmod{3}$ or $\ell_2 \equiv 0 \pmod{3}$;}\\
			a + 2 + \nu_2 \left( m_2	- k_2 \right) + \nu_2 \left( {m_2 \choose k_2} \right), &\text{if $\ell_1 \equiv 
				0 \pmod{3}$ and $\ell_2 \not\equiv 
				0 \pmod{3}$;}\\
			\left\lceil \frac{a+1}{2} \right\rceil + \nu_2 \left( {m_2 \choose k_2} \right), &\text{if $\ell_1 \equiv 
				1 \pmod{3}$ and $\ell_2 \equiv 
				1 \pmod{3}$;}\\
			\left\lceil\frac{a}{2}\right\rceil + 1 + \nu_2 \left( m_2	- k_2 \right) + \nu_2 \left( {m_2 \choose k_2} \right), &\text{if $\ell_1 \equiv 2 \pmod{3}$ and $\ell_2 \equiv 2 \pmod{3}$.}
			\end{cases}
			\end{equation*}
			\item[(ii)] Let $p\neq 5$ be an odd prime and let $r=\ell_1 p^b \mod{z(p)}$ and $s=\ell_2 p^a \mod{z(p)}$. If $p\equiv \pm 1\pmod{5}$, then
			\begin{equation*}
			\nu_p\left( {\ell_1 p^b \choose \ell_2 p^a}_F \right) = [r<s]\left(a + \nu_p \left( m_p	- k_p \right) + \nu_p(F_{z(p)}) \right) + \nu_p \left( {m_p \choose k_p} \right),
			\end{equation*} 
			and if $p\equiv \pm 2\pmod{5}$, then $\nu_p\left( {\ell_1 p^b \choose \ell_2 p^a}_F \right)$ is equal to
			\begin{equation*}
			\begin{cases}
			\nu_p \left( {m_p \choose k_p} \right), &\text{if $r=s$ or $\ell_2 \equiv 0 \pmod{z(p)}$;}\\
			a + \nu_p (F_{z(p)}) + \nu_p \left( m_p	- 	k_p \right) + \nu_p \left( {m_p \choose k_p} \right), &\text{if $\ell_1 \equiv 0 \pmod{z(p)}$ and}\\
			& \ell_2 \not\equiv 0 \pmod{z(p)};\\
			\frac{a}{2} + \nu_p \left( {m_p \choose k_p} \right), &\text{if $r> s$, $\ell_1,\ell_2 \not\equiv 0\pmod{z(p)}$,}\\
			&\text{and $a$ is even;}\\ 
			\frac{a}{2} + \nu_p (F_{z(p)}) + \nu_p \left( m_p	- 	k_p \right) + \nu_p \left( {m_p \choose k_p} \right), &\text{if $r<s$, $\ell_1,\ell_2 \not\equiv 0 \pmod{z(p)}$,}\\
			&\text{and $a$ is even;}\\
			\frac{a+1}{2} + \nu_p \left( m_p	- k_p \right) + \nu_p \left( {m_p \choose k_p} \right), &\text{if $r>s$, $\ell_1,\ell_2 \not\equiv 0\pmod{z(p)}$,}\\
			&\text{and $a$ is odd;}\\
			\frac{a-1}{2} + \nu_p(F_{z(p)}) + \nu_p \left( {m_p \choose k_p} \right), &\text{if $r<s$, $\ell_1,\ell_2 \not\equiv 0\pmod{z(p)}$,}\\
			&\text{and $a$ is odd.}
			\end{cases}
			\end{equation*}	
		\end{itemize} 
	\end{theorem}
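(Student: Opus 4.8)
The plan is to feed Theorem~\ref{theorem2} with $m=\ell_1 p^b$ and $k=\ell_2 p^a$ --- its part~(iii) when $p$ is an odd prime $\ne 5$, its part~(i) when $p=2$ --- which turns the Fibonomial valuation into $\nu_p\binom{m'}{k'}$ plus an explicit correction, and then to compute $\nu_p\binom{m'}{k'}=\nu_p(m'!)-\nu_p(k'!)-\nu_p((m'-k')!)$ by hitting each factorial with Theorem~\ref{theorem1}. Put $\zeta=z(p)$. Parts (iii) and~(v) of Lemma~\ref{floorlemma} give $m'=\lfloor\ell_1 p^b/\zeta\rfloor$, $k'=\lfloor\ell_2 p^a/\zeta\rfloor$, $m'-k'=\lfloor(\ell_1 p^{b-a}-\ell_2)p^a/\zeta\rfloor+[r<s]$ and likewise $m_p-k_p=\lfloor(\ell_1 p^{b-a}-\ell_2)/\zeta\rfloor+[r'<s']$, where $r'=\ell_1 p^{b-a}\bmod\zeta$ and $s'=\ell_2\bmod\zeta$; since $p\equiv\pm1\pmod{\zeta}$ by Lemma~\ref{lemma1}(ii), Theorem~\ref{theorem1} (with $m=\zeta$, exponent $a$, and "$\ell$'' taken to be $\ell_1 p^{b-a}$, then $\ell_2$, then $\ell_1 p^{b-a}-\ell_2$) evaluates all three factorials.

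From here it is bookkeeping. The three "$\frac{\ell(p^a-1)}{\zeta(p-1)}$'' leading terms cancel identically; the fractional-part corrections collapse, through the floor identities of Lemma~\ref{floorlemma}, to $a[r'<s']$ when $p\equiv1\pmod{\zeta}$ and to a $\tfrac{a-1}{2}$-type term plus a single $1$ when $p\equiv-1\pmod{\zeta}$ and $a$ is odd; the "$\tfrac a2\delta$'' or "$\tfrac{a-1}{2}\delta$'' corrections (with $\delta$ the Iverson bracket for non-vanishing mod $\zeta$) collapse to $0$, $\tfrac a2$, or $a$ according to which of $r'$, $s'$, $r'-s'$ are $\equiv0\pmod{\zeta}$; and the residual factorials $\nu_p(m_p!)-\nu_p(k_p!)-\nu_p((m_p-k_p-[r'<s'])!)$ equal $\nu_p\binom{m_p}{k_p}$ if $r'\ge s'$ and $\nu_p\binom{m_p}{k_p}+\nu_p(m_p-k_p)$ if $r'<s'$, using $\nu_p((N-1)!)=\nu_p(N!)-\nu_p(N)$. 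Finally the term $[r<s]\bigl(\nu_p(\lfloor(\ell_1 p^b-\ell_2 p^a+\zeta)/\zeta\rfloor)+\nu_p(F_\zeta)\bigr)$ from Theorem~\ref{theorem2}(iii) is reconciled by noting $\lfloor(\ell_1 p^b-\ell_2 p^a+\zeta)/\zeta\rfloor=m'-k'$ once $r<s$, so its first piece cancels the $-[r<s]\nu_p(m'-k')$ produced when $\nu_p((m'-k')!)$ was split off, leaving only $\nu_p(F_\zeta)$. Collecting the surviving terms --- and recording that $r=r'$, $s=s'$ for $a$ even while $r$, $s$ are $-r'$, $-s'$ mod $\zeta$ for $a$ odd, so that "$r<s$'' and "$r'<s'$'' swap --- produces exactly the two displayed lists in part~(ii): the $p\equiv\pm1\pmod{5}$ one from the $p\equiv1\pmod{\zeta}$ computation and the $p\equiv\pm2\pmod{5}$ one from the $p\equiv-1\pmod{\zeta}$ computation, broken up by the parity of $a$.

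Part~(i) runs the identical computation with Theorem~\ref{theorem2}(i) in place of~(iii). Here $\zeta=z(2)=3$, but $\nu_2$ of a Fibonomial is governed by residues mod $6$ and carries the extra $+1,+2,+3$ corrections tabulated there; and since $2\equiv-1\pmod{3}$, the value $2^c\bmod 3$ flips with the parity of $c$, which is why the statement forks on whether $a\equiv b\pmod{2}$. Pushing $\ell_1 2^b\bmod 6$ and $\ell_2 2^a\bmod 6$ through the six cases of Theorem~\ref{theorem2}(i), with the same factorial bookkeeping for the binomial part, delivers the mod-$3$ conditions on $\ell_1,\ell_2$ and the ceilings $\lceil a/2\rceil$, $\lceil(a+1)/2\rceil$.

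The hard part is not any single identity --- each cancellation above is a couple of lines --- but the case management: one must carry the Iverson brackets $[r'\ne0]$, $[s'\ne0]$, $[(r'-s')\bmod\zeta\ne0]$ and the order of $r'$ versus $s'$ simultaneously through the parity-of-$a$ split and the degenerate sub-cases $\ell_1\equiv0$ or $\ell_2\equiv0\pmod{\zeta}$, and verify that every resulting combination lands on a listed branch with the correct leading constant. Keeping straight how the stated hypothesis "$r<s$'' turns into the "$r'<s'$'' that the factorial computation actually sees when $a$ is odd is the most error-prone step, and it is exactly what creates the asymmetry --- $\nu_p(F_{z(p)})$ present in one half of a branch and $\nu_p(m_p-k_p)$ in the other --- in the $p\equiv\pm2\pmod{5}$ formula.
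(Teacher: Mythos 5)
The paper itself gives no proof of this statement---it is imported verbatim from \cite[Theorem 13]{PP}---but your derivation (substituting $m=\ell_1 p^b$, $k=\ell_2 p^a$ into Theorem \ref{theorem2}, evaluating the three resulting factorials with Theorem \ref{theorem1}, and tracking the floor/Iverson corrections, including the $r\leftrightarrow r'$ reversal when $a$ is odd) is exactly the route taken in that reference, and the cancellations you identify check out. Correct, and essentially the same approach as the source.
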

	
	In fact, Phunphayap and Pongsriiam \cite{PP} obtain other results analogous to Theorems \ref{theorem2} and \ref{theorem3} too but we do not need them in this article.
	
	\section{Main Results}\label{sec3}
	
	We begin with the calculation of the 2-adic valuation of ${2^a n \choose n}_F$ and then use it to determine the integers $n$ such that ${2n \choose n}_F,{4n \choose n}_F,{8n \choose n}_F$ are even. Then we calculate the $p$-adic valuation of ${p^a n \choose n}_F$ for all odd primes $p$. For binomial coefficients, we know that $\nu_2 \left({2n \choose n}\right) = s_2 (n)$. For Fibonomial coefficients, we have the following result.
	
	\begin{theorem}\label{maintheorem0}
		Let $a$ and $n$ be positive integers, $\varepsilon = [n \not \equiv 0 \pmod{3}]$, and $A = \left\lfloor \frac{(2^a - 1)n}{3\cdot 2^{\nu_2 (n)}} \right\rfloor$. Then the following statements hold.
		\begin{itemize}
			\item[(i)] If $a$ is even, then 
			\begin{equation}\label{eq0thm0}
				\nu_2 \left( {2^a n \choose n}_F \right) = \delta + A - \frac{a}{2}\varepsilon  - \nu_2 (A!) = \delta + s_2 (A) - \frac{a}{2}\varepsilon,
			\end{equation} 
			where $\delta = [n \bmod{6} = 3,5]$. In other words, $\delta = 1$ if $n \equiv 3,5 \pmod{6}$ and $\delta = 0$ otherwise.
			\item[(ii)] If $a$ is odd, then 
			\begin{equation}\label{eq00thm0}
				 \nu_2 \left( {2^a n \choose n}_F \right) = \delta + A - \frac{a-1}{2}\varepsilon - \nu_2 (A!) = \delta + s_2 (A) - \frac{a-1}{2}\varepsilon,
			\end{equation} 
			where $\delta = \frac{(n\bmod{6}) - 1}{2} [2\nmid n] + \left\lceil \frac{\nu_2 (n) + 3 - n \bmod{3}}{2} \right\rceil [n \bmod 6 = 2,4]$. In other words, $\delta = \frac{(n \bmod{6}) - 1}{2}$ if $n$ is odd, $\delta = 0$ if $n \equiv 0 \pmod{6}$, $\delta = \left\lceil \frac{\nu_2 (n)}{2} \right\rceil + 1$ if $n \equiv 4 \pmod{6}$, and $\delta = \left\lceil \frac{\nu_2 (n) + 1}{2} \right\rceil $ if $n \equiv 2 \pmod{6}$.
		\end{itemize}
	\end{theorem}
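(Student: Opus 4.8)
Since $\nu_2(A!) = A - s_2(A)$ by Lemma~\ref{Legendre}, the two displayed equalities in each of \eqref{eq0thm0} and \eqref{eq00thm0} are equivalent, so in both parts it suffices to prove the first one. The plan is to split according to whether $n$ is odd or even and to reduce everything to Theorems~\ref{theorem1},~\ref{theorem2},~\ref{theorem3}, Legendre's formula, and the floor identities of Lemma~\ref{floorlemma}.

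\textbf{The case $n$ even.} Write $n = 2^{t}\ell$ with $t = \nu_2(n) \ge 1$ and $\ell$ odd, so ${2^a n \choose n}_F = {\ell 2^{a+t} \choose \ell 2^{t}}_F$. I would apply Theorem~\ref{theorem3}(i) with $\ell_1 = \ell_2 = \ell$, $b = a+t$ and lower exponent $t$; here $z(2) = 3$, $m_2 = \lfloor \ell 2^{a}/3 \rfloor$, $k_2 = \lfloor \ell/3 \rfloor$, and the parity condition ``$a \equiv b \pmod 2$'' of that theorem is exactly ``$a$ even''. Because $\ell_1 = \ell_2$, the congruence hypotheses there collapse to conditions on $\ell \bmod 3$ alone, so only a handful of subcases survive, and in each one must evaluate $\nu_2\!\left({m_2 \choose k_2}\right) = \nu_2(m_2!) - \nu_2(k_2!) - \nu_2((m_2 - k_2)!)$. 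Theorem~\ref{theorem1} with $m = 3$ and $2 \equiv -1 \pmod 3$ gives $\nu_2(m_2!) - \nu_2(k_2!) = \tfrac{\ell(2^a - 1)}{3} - \tfrac{a}{2}\varepsilon$ for $a$ even and $\tfrac{\ell(2^a - 1)}{3} - \tfrac{a-1}{2}\varepsilon - \{\ell/3\}$ for $a$ odd. A short computation with Lemma~\ref{floorlemma}(v), using $2^a - 1 \equiv 0 \pmod 3$ when $a$ is even and $\equiv 1 \pmod 3$ when $a$ is odd, shows $m_2 - k_2 = A$ and $\tfrac{\ell(2^a-1)}{3} - \{\ell/3\} = A$ in every subcase except ``$a$ odd and $\ell \equiv 2 \pmod 3$'', where instead $m_2 - k_2 = A + 1$; there the extra summand $\nu_2(m_2 - k_2)$ appearing in Theorem~\ref{theorem3}(i) exactly compensates $\nu_2((A+1)!) = \nu_2(A!) + \nu_2(A+1)$. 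It then remains to rewrite the outcome using $n \equiv (-1)^{t}\ell \pmod 3$ to translate ``$\ell \bmod 3$ and the parity of $t$'' into ``$n \bmod 6$'', which recovers the $\delta$ of parts (i) and (ii).

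\textbf{The case $n$ odd.} Now Theorem~\ref{theorem3} does not apply, since the lower exponent would be $0$, so I would use Theorem~\ref{theorem2}(i) directly with $m = 2^a n$, $k = n$, $m - k = (2^a - 1)n$. The pair $(r,s) = (2^a n \bmod 6,\ n \bmod 6)$ depends only on $n \bmod 6 \in \{1,3,5\}$ and the parity of $a$, producing the pairs $(4,1),(0,3),(2,5)$ when $a$ is even and $(2,1),(0,3),(4,5)$ when $a$ is odd, with Theorem~\ref{theorem2}(i)-corrections $0,2,2$ and $0,2,3$ respectively. To evaluate $A_2$ I would write $\lfloor c n/6 \rfloor = \lfloor \lfloor c n/3 \rfloor / 2 \rfloor$ for $c \in \{1, 2^a - 1, 2^a\}$ by Lemma~\ref{floorlemma}(vi), use the elementary identity $\nu_2(\lfloor N/2 \rfloor!) = \nu_2(N!) - \lfloor N/2 \rfloor$ (immediate from Legendre's formula), and apply Theorem~\ref{theorem1} with $m = 3$ to $\nu_2(\lfloor 2^a n/3 \rfloor!) - \nu_2(\lfloor n/3 \rfloor!)$. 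After this reduction one is left to track the parities of $A = \lfloor (2^a - 1)n/3 \rfloor$, of $C = \lfloor n/3 \rfloor$, and of the carry $[\{(2^a-1)n/3\} + \{n/3\} \ge 1]$ in each of the classes $n \equiv 1,3,5 \pmod 6$; each parity is forced (for instance $C$ is even precisely when $n \equiv 1 \pmod 6$, and $A$ is odd when $n \equiv 3,5 \pmod 6$), and matching against the corrections above yields exactly the $\delta$ claimed in parts (i) and (ii).

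The argument is conceptually routine but organizationally heavy: it fans out into roughly a dozen subcases (the parity of $a$; $n \bmod 6$; and in the even case additionally the parity of $\nu_2(n)$ and the value of $\ell \bmod 3$). I expect the main obstacle to be the $n$ odd case, where $\lfloor n/6 \rfloor$ and $\lfloor (2^a - 1)n/6 \rfloor$ are not of the shape to which Theorem~\ref{theorem1} applies, so one must pass through the floor identities above and then check the relevant parity statements separately in every residue class — precisely the bookkeeping the authors warn is ``inevitably long''.
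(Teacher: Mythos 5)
Your proposal is correct and follows essentially the same strategy as the paper: split on the parity of $n$, handle odd $n$ via Theorem~\ref{theorem2}(i) with the pairs $(r,s)$ you list, handle even $n=2^t\ell$ via Theorem~\ref{theorem3}(i) with $\ell_1=\ell_2=\ell$, and evaluate the resulting factorial valuations with Theorem~\ref{theorem1} and Legendre's formula, including the identification $m_2-k_2=A$ (or $A+1$ when $a$ is odd and $\ell\equiv 2\pmod 3$, with $\nu_2(m_2-k_2)$ cancelling the extra factor of $A+1$). The only cosmetic difference is in the odd-$n$ bookkeeping, where you halve all three floor terms and track parities of $A$ and $\lfloor n/3\rfloor$ while the paper computes $\lfloor n/6\rfloor+\lfloor(2^a-1)n/6\rfloor$ directly; both reductions are routine and equivalent.
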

	
	\begin{proof}
		The second equalities in \eqref{eq0thm0} and \eqref{eq00thm0} follow from Legendre's formula. So it remains to prove the first equalities in \eqref{eq0thm0} and \eqref{eq00thm0}. To prove (i), we suppose that $a$ is even and divide the consideration into two cases.
		
		\noindent{\bf Case 1.} $2 \nmid n$. Let $r = 2^a n \bmod{6}$ and $s = n \bmod{6}$. Then $s \in \{1,3,5\}$, $r \equiv 2^a n \equiv 4n \equiv 4s \pmod{6}$, and therefore $(r,s) = (4,1),(0,3),(2,5)$. In addition, $A = \left\lfloor \frac{(2^a - 1)n}{3} \right\rfloor = \frac{(2^a - 1)n}{3}$ and $\delta = [s = 3,5]$. By Theorem \ref{theorem2}(i), the left--hand side of \eqref{eq0thm0} is $A_2$ if $s = 1$ and $A_2 + 2$ if $s = 3,5$, where $A_2 = \nu_{2}\left( \left\lfloor \frac{2^a n}{6} \right\rfloor ! \right) - \nu_2 \left( \left\lfloor \frac{n}{6} \right\rfloor ! \right) - \nu_2 \left( \left\lfloor \frac{(2^a - 1)n}{6} \right\rfloor ! \right)$. We obtain by Theorem \ref{theorem1} that 
		\begin{equation*}
			\nu_2 \left( \left\lfloor \frac{2^a n}{6} \right\rfloor ! \right) = \nu_2 \left( \left\lfloor \frac{2^{a-1}n}{3} \right\rfloor ! \right) = \frac{(2^{a-1} - 1)n}{3} - \frac{a-2}{2}\varepsilon - \left\{ \frac{n}{3} \right\} + \nu_2 \left( \left\lfloor \frac{n}{3} \right\rfloor ! \right).
		\end{equation*}
		By Legendre's formula and Lemma \ref{floorlemma}, we have
		\begin{equation*}
			\nu_2 \left( \left\lfloor \frac{n}{6} \right\rfloor ! \right) = \nu_2 \left( \left\lfloor \frac{n}{3} \right\rfloor ! \right) - \left\lfloor \frac{n}{6} \right\rfloor,
		\end{equation*}
		\begin{equation*}
			\nu_2 \left( \left\lfloor \frac{(2^a - 1)n}{6} \right\rfloor ! \right) = \nu_2 \left( \left\lfloor \frac{(2^a - 1)n}{3} \right\rfloor ! \right) - \left\lfloor \frac{(2^a - 1)n}{6} \right\rfloor = \nu_2 (A!) - \left\lfloor \frac{(2^a - 1)n}{6} \right\rfloor, 
		\end{equation*}
		\begin{equation*}
			\left\lfloor \frac{n}{6} \right\rfloor + \left\lfloor \frac{(2^a - 1)n}{6} \right\rfloor = \frac{n - s}{6} + \frac{2^a n - r}{6} - \frac{n - s}{6} + \left\lfloor \frac{r - s}{6} \right\rfloor = \frac{2^a n - r}{6} - [s \in \{ 3,5 \}].
		\end{equation*}
		From the above observation, we obtain 
		\begin{align*}
			A_2 &= \frac{(2^{a-1} - 1)n}{3} - \frac{a-2}{2}\varepsilon - \left\{ \frac{n}{3} \right\} + \frac{2^a n - r}{6} - [s\in\{ 3,5 \}] - \nu_2 (A!) \\
			&= A - \frac{a-2}{2}\varepsilon - \left\{\frac{n}{3} \right\} - \frac{r}{6} - [s \in \{3,5\}] - \nu_2 (A!) \\
			&=
			\begin{cases}
				A - \frac{a}{2} - \nu_2 (A!), &\text{if $s = 1$;} \\
				A - \nu_2 (A!) - 1, &\text{if $s = 3$;} \\
				A - \frac{a}{2} - \nu_2 (A!) - 1, &\text{if $s = 5$.}
			\end{cases}
		\end{align*}
		It is now easy to check that $A_2$ (if $s=1$), $A_2 + 2$ (if $s=3,5$) are the same as $\delta + A - \frac{a}{2}\varepsilon - \nu_2 (A!)$ in \eqref{eq0thm0}. So \eqref{eq0thm0} is verified.
		
		\noindent{\bf Case 2.} $2 \mid n$. We write $n = 2^b \ell$ where $2 \nmid \ell$ and  let $m = \left\lfloor \frac{2^a \ell}{3} \right\rfloor$, $k = \left\lfloor \frac{\ell}{3} \right\rfloor$, $r = 2^a \ell \bmod{3}$, and $s = \ell \bmod{3}$. Since $a$ is even, $r = s$. Then we apply Theorem \ref{theorem3}(i) to obtain 
		\begin{equation}\label{eq2thm0}
			\nu_2 \left( {2^a n \choose n}_F \right) = \nu_2 \left( {\ell 2^{a+b} \choose \ell 2^b}_F \right) = \nu_2 \left( {m \choose k} \right) = \nu_2 (m!) - \nu_2 (k!) - \nu_2 ((m-k)!).
		\end{equation} 
		We see that $\ell \not\equiv 0 \pmod{3}$ if and only if $n \not\equiv 0\pmod{3}$. In addition, $A = \frac{(2^a - 1)\ell}{3}$ and $\delta = 0$. By Theorem \ref{theorem1}, we have
		\begin{equation*}
			\nu_2 (m!) = A - \frac{a}{2} \varepsilon + \nu_2 (k!).
		\end{equation*}
		In addition,
		\begin{equation*}
			m - k = \left\lfloor \frac{2^a \ell}{3} \right\rfloor - \left\lfloor \frac{\ell}{3} \right\rfloor = \frac{2^a \ell - r}{3} - \frac{\ell - s}{3} = \frac{2^a \ell - \ell}{3} = A.
		\end{equation*}
		So $\nu_2 ((m-k)!) = \nu_2 (A!)$. Substituting these in \eqref{eq2thm0}, we obtain \eqref{eq0thm0}. This completes the proof of (i).
		
		To prove (ii), we suppose that $a$ is odd and divide the proof into two cases.
		
		\noindent{\bf Case 1.} $2\nmid n$. This case is similar to Case 1 of the previous part. So we let $r = 2^a n \bmod{6}$ and $s = n \bmod{6}$. Then $s \in \{1,3,5\}$, $r \equiv 2^a n \equiv 2n \equiv 2s \pmod{6}$, $(r,s) = (2,1), (0,3),(4,5)$, $\delta = \frac{s-1}{2}$, and the left--hand side of \eqref{eq00thm0} is $A_2$ if $s = 1$, $A_2 + 2 $ if $s = 3$, and $A_2 + 3$ if $s = 5$, where $A_2 = \nu_2 \left( \left\lfloor \frac{2^a n}{6} \right\rfloor ! \right) - \nu_2 \left( \left\lfloor \frac{n}{6} \right\rfloor ! \right) - \nu_2 \left( \left\lfloor \frac{(2^a - 1)n}{6} \right\rfloor ! \right)$. In addition, we have
		\begin{equation*}
			\nu_2 \left( \left\lfloor \frac{2^a n}{6} \right\rfloor ! \right) = \frac{(2^{a-1} - 1)n}{3} - \frac{a - 1}{2}\varepsilon + \nu_2 \left( \left\lfloor\frac{n}{3}\right\rfloor ! \right),
		\end{equation*}
		\begin{equation*}
			\nu_2 \left( \left\lfloor \frac{n}{6} \right\rfloor ! \right) = \nu_2 \left( \left\lfloor \frac{n}{3} \right\rfloor ! \right) - \left\lfloor \frac{n}{6} \right\rfloor,
		\end{equation*}
		\begin{equation*}
			\nu_2 \left( \left\lfloor \frac{(2^a - 1)n}{6} \right\rfloor ! \right) = \nu_2 (A!) - \left\lfloor \frac{(2^a - 1)n}{6} \right\rfloor,
		\end{equation*}
		\begin{equation*}
			\left\lfloor \frac{n}{6} \right\rfloor + \left\lfloor \frac{(2^a - 1)n}{6} \right\rfloor = \frac{2^a n - r}{6} - [s \in \{3,5\}].
		\end{equation*}
		Therefore 
		\begin{equation*}
			A_2 = \frac{(2^{a-1} - 1)n}{3} - \frac{a-1}{2}\varepsilon + \frac{2^a n - r}{6} - [s\in\{ 3,5 \}] - \nu_2 (A!).
		\end{equation*}
		Furthermore,
		\begin{equation*}
			A = \left\lfloor \frac{(2^a - 1)n}{3} \right\rfloor = \frac{2^a n - r}{3} - \frac{n - s}{3} + \left\lfloor \frac{r - s}{3} \right\rfloor =
			\begin{cases}
				\frac{(2^a - 1)n}{3} - \frac{1}{3}, &\text{if $s = 1$;}\\
				\frac{(2^a - 1)n}{3}, &\text{if $s = 3$;}\\
				\frac{(2^a - 1)n}{3} - \frac{2}{3}, &\text{if $s = 5$,}
			\end{cases}
		\end{equation*}
		which implies that $A = \frac{(2^a - 1)n}{3} - \frac{r}{6}$. Then
		\begin{equation*}
			 A_2 = A - \frac{a-1}{2}\varepsilon - [s \in \{3,5\}] - \nu_2 (A!).
		\end{equation*} 
		It is now easy to check that $A_2$ (if $s=1$), $A_2 + 2$ (if $s =3$), and $A_2 +3$ (if $s = 5$), are the same as $\delta + A - \frac{a-1}{2}\varepsilon  - \nu_2 (A!)$ in \eqref{eq00thm0}. So \eqref{eq00thm0} is verified.
		
		\noindent{\bf Case 2.} $2\mid n$. This case is similar to Case 2 of the previous part. So we write $n = 2^{b} \ell$ where $2 \nmid \ell$ and let $m = \left\lfloor \frac{2^a \ell}{3} \right\rfloor$, $k = \left\lfloor \frac{\ell}{3} \right\rfloor$, $r = 2^a \ell \bmod{3}$, and $s = \ell \bmod{3}$. We obtain by Theorem \ref{theorem3} that $\nu_2 \left( {2^a n \choose n}_F \right)$ is equal to
		\begin{equation}\label{eq5thm0}
			\nu_2 \left( {\ell 2^{a+b} \choose \ell 2^{b}}_F \right) = 
			\begin{cases}
				\nu_2 \left( {m \choose k} \right), &\text{if $\ell \equiv 0 \pmod{3}$;}\\
				\left\lceil \frac{b + 1}{2} \right\rceil + \nu_2 \left( {m \choose k} \right), &\text{if $\ell \equiv 1 \pmod{3}$;}\\
				\left\lceil \frac{b}{2} \right\rceil + 1 + \nu_2 (m - k) + \nu_2 \left( {m \choose k} \right), &\text{if $\ell \equiv 2 \pmod{3}$.}
			\end{cases}
		\end{equation}
		By Theorem \ref{theorem1}, we have
		\begin{equation*}
			\nu_2 (m!) = \frac{(2^a - 1)\ell}{3} - \frac{a - 1}{2} \varepsilon - \left\{ \frac{\ell}{3} \right\} + \nu_2 (k!).
		\end{equation*}
		Since $(2^a - 1)\ell \equiv \ell \pmod{3}$, $\left\{ \frac{(2^a - 1)\ell }{3} \right\} = \left\{ \frac{\ell}{3} \right\}$. This implies that $\nu_2 (m!) = A - \frac{a-1}{2}\varepsilon + \nu_2(k!)$. In addition, $(r,s) = (0,0), (2,1),(1,2)$, and
		\begin{equation*}
			m - k = \left\lfloor \frac{2^a \ell}{3} \right\rfloor - \left\lfloor \frac{\ell}{3} \right\rfloor = \frac{2^a \ell - r}{3} - \frac{\ell - s}{3} = \frac{(2^a - 1)\ell - (r-s)}{3} = A + [s = 2].
		\end{equation*}
		From the above observation, we obtain
		\begin{equation*}
			\nu_2 \left( {m \choose k} \right) = \nu_2 (m!) - \nu_2 (k!) - \nu_2 ((m-k)!) = 
			\begin{cases}
				A - \frac{a - 1}{2} \varepsilon - \nu_2 (A!), &\text{if $s=0,1$;}\\
				A - \frac{a - 1}{2} \varepsilon - \nu_2 ((A+1)!), &\text{if $s=2$.}
			\end{cases}
		\end{equation*}
		Substituting this in \eqref{eq5thm0}, we see that
		\begin{equation}\label{eq6thm0}
			\nu_2 \left( {2^an \choose n}_F \right) = 
			\begin{cases}
				A - \nu_2 (A!), &\text{if $\ell \equiv 0 \pmod{3}$;}\\
				\left\lceil \frac{b + 1}{2} \right\rceil + A - \frac{a - 1}{2} - \nu_2 (A!), &\text{if $\ell \equiv 1 \pmod{3}$;}\\
				\left\lceil \frac{b}{2} \right\rceil + 1 + A - \frac{a - 1}{2} - \nu_2 (A!), &\text{if $\ell \equiv 2 \pmod{3}$.}
			\end{cases}
		\end{equation}
		Recall that $n = 2^b \ell \equiv (-1)^b \ell \pmod{3}$. So \eqref{eq6thm0} implies that
		\begin{equation*}
			\nu_2 \left( {2^an \choose n}_F \right) = 
			\begin{cases}
				A - \nu_2 (A!), &\text{if $n \equiv 0 \pmod{3}$;}\\
				\frac{b}{2} + 1 + A - \frac{a-1}{2} - \nu_2 (A!), &\text{if $n \equiv 1 \pmod{3}$ and $b$ is even;}\\
				\frac{b+1}{2} + 1 + A - \frac{a-1}{2} - \nu_2 (A!), &\text{if $n \equiv 1 \pmod{3}$ and $b$ is odd;}\\
				\frac{b}{2} + 1 + A - \frac{a-1}{2} - \nu_2 (A!), &\text{if $n \equiv 2 \pmod{3}$ and $b$ is even;}\\
				\frac{b+1}{2} + A - \frac{a-1}{2} - \nu_2 (A!), &\text{if $n \equiv 2 \pmod{3}$ and $b$ is odd,}\\
			\end{cases}
		\end{equation*}
		which is the same as \eqref{eq00thm0}. This completes the proof.
	\end{proof}

	We can obtain the main result of Maques and Trojovsk\'y \cite{Mar1} as a corollary.

	\begin{corollary}
		{\rm(Marques and Trojovsk\'y \cite{Mar1})} 
		${2n \choose n}_F$ is even for all $n \geq 2$.
	\end{corollary}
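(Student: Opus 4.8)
The plan is to specialize Theorem~\ref{maintheorem0}(ii) to $a = 1$. Since $a = 1$ is odd we have $\frac{a-1}{2} = 0$, so \eqref{eq00thm0} collapses to
\[
\nu_2\left({2n \choose n}_F\right) = \delta + s_2(A), \qquad A = \left\lfloor \frac{n}{3\cdot 2^{\nu_2(n)}} \right\rfloor,
\]
with $\delta$ the quantity described in the theorem. Both $\delta$ and $s_2(A)$ are nonnegative, and $s_2(A)\ge 1$ exactly when $A\ge 1$; hence it suffices to show that for every $n\ge 2$ we have $\delta\ge 1$ or $A\ge 1$.

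First I would dispose of the two even residue classes modulo $6$. If $n\equiv 4\pmod 6$ then $\delta = \lceil \nu_2(n)/2\rceil + 1 \ge 1$, and if $n\equiv 2\pmod 6$ then $\delta = \lceil (\nu_2(n)+1)/2\rceil \ge 1$ because $\nu_2(n)\ge 1$. Next, for odd $n$ we have $\delta = \frac{(n\bmod 6)-1}{2}$, which equals $1$ when $n\equiv 3\pmod 6$ and $2$ when $n\equiv 5\pmod 6$, so $\delta\ge 1$ there as well. The only remaining classes are $n\equiv 0\pmod 6$ and $n\equiv 1\pmod 6$, in which $\delta = 0$ and one must instead check $A\ge 1$. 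Writing $n = 2^{b}\ell$ with $\ell$ odd, so that $A = \lfloor \ell/3\rfloor$: if $n\equiv 0\pmod 6$ then $\ell$ is an odd positive multiple of $3$, hence $\ell\ge 3$ and $A = \ell/3\ge 1$; if $n\equiv 1\pmod 6$ then $n$ is odd and $n\ge 2$ forces $n\ge 7$, so $A = \lfloor n/3\rfloor\ge 2$. In all cases $\nu_2\left({2n \choose n}_F\right) = \delta + s_2(A)\ge 1$, so ${2n \choose n}_F$ is even.

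I do not expect any genuine obstacle here beyond the bookkeeping of the case split; the only delicate point is that in the two classes $n\equiv 0,1\pmod 6$ the term $\delta$ contributes nothing, so one must invoke the hypothesis $n\ge 2$ (which in those classes means $n\ge 6$ or $n\ge 7$) to guarantee $A\ge 1$ and hence $s_2(A)\ge 1$.
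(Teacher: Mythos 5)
Your proof is correct and follows essentially the same route as the paper: specialize Theorem~\ref{maintheorem0} to $a=1$, observe $\nu_2\left({2n \choose n}_F\right)=\delta+s_2(A)$, note $\delta>0$ for all residues of $n$ modulo $6$ except $0$ and $1$, and in those two remaining classes use $n\geq 2$ to force $A\geq 1$ and hence $s_2(A)>0$. The only difference is that you spell out the residue-by-residue verification of $\delta\geq 1$ that the paper states in one line.
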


	\begin{proof}
		Let $n \geq 2$ and apply Theorem \ref{maintheorem0} with $a = 1$ to obtain $\nu_2 \left( {2n \choose n}_F \right) = \delta + s_2 (A)$. If $n \not \equiv 0,1 \pmod{6}$, then $\delta >0$. If $n \equiv 0 \pmod{6}$, then $n \geq 3\cdot 2^{\nu_2 (n)}$, and so $A \geq 1$ and $s_2 (A) > 0$. If $n \equiv 1 \pmod{6}$, then $A = \left\lfloor \frac{n}{3} \right\rfloor > 1$ and so $s_2 (A) > 0$. In any case, $\nu_2 \left( {2n \choose n}_F \right) > 0$. So ${2n \choose n}_F$ is even.
	\end{proof}

	\begin{corollary}\label{corollary1}
		Let $n \geq 2$. Then ${4n \choose n}_F$ is even if and only if $n$ is not a power of 2. In other words, for each $n \in \mathbb{N}$, ${4n \choose n}_F$ is odd if and only if $n = 2^k$ for some $k\geq 0$.
	\end{corollary}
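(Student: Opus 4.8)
The plan is to invoke Theorem~\ref{maintheorem0}(i) with $a=2$, which is even. Write $n=2^{b}\ell$ with $\ell$ odd (so $b=\nu_2(n)$). Then the quantity $A$ appearing in the theorem simplifies, since
\[
A=\left\lfloor \frac{(2^{2}-1)n}{3\cdot 2^{\nu_2(n)}}\right\rfloor=\left\lfloor \frac{n}{2^{\nu_2(n)}}\right\rfloor=\ell,
\]
the odd part of $n$, and $\tfrac a2=1$. Hence \eqref{eq0thm0} becomes
\[
\nu_2\!\left({4n \choose n}_F\right)=\delta+s_2(\ell)-\varepsilon,
\]
where $\varepsilon=[\,n\not\equiv 0\pmod 3\,]$ and $\delta=[\,n\bmod 6\in\{3,5\}\,]$. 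The proof will only use the crude facts $\delta\in\{0,1\}$, $\varepsilon\in\{0,1\}$, together with the elementary observation that $s_2(m)=1$ exactly when $m$ is a power of $2$, so $s_2(m)\ge 2$ otherwise.

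First I would dispose of the case $n=2^{k}$ (which also covers $n=1$). Here $\ell=1$, so $s_2(\ell)=1$; moreover a power of $2$ is never congruent to $3$ or $5$ modulo $6$ (it is odd and $\equiv 1\pmod 3$ when $k=0$, and even when $k\ge 1$), so $\delta=0$; and $2^{k}\not\equiv 0\pmod 3$, so $\varepsilon=1$. Therefore $\nu_2\!\left({4n \choose n}_F\right)=0+1-1=0$, i.e.\ ${4n \choose n}_F$ is odd.

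For the converse, suppose $n$ is not a power of $2$. Then its odd part satisfies $\ell>1$, and since the only odd power of $2$ is $1$, the integer $\ell$ is not a power of $2$; hence $s_2(\ell)\ge 2$. Using $\delta\ge 0$ and $\varepsilon\le 1$ we obtain
\[
\nu_2\!\left({4n \choose n}_F\right)=\delta+s_2(\ell)-\varepsilon\ge 0+2-1=1>0,
\]
so ${4n \choose n}_F$ is even. Together with the previous paragraph this establishes both directions of the corollary. I expect no real obstacle: the only point requiring a moment's thought is that the generic expression $A$ from Theorem~\ref{maintheorem0} collapses to the odd part of $n$ when $a=2$, after which everything is governed by whether $s_2(\ell)$ is $1$ or at least $2$.
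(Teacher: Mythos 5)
Your proposal is correct and follows essentially the same route as the paper: apply Theorem~\ref{maintheorem0}(i) with $a=2$, note that $A$ is the odd part of $n$, and conclude via $s_2(A)=1$ precisely when $n$ is a power of $2$ versus $s_2(A)\ge 2$ otherwise. The only cosmetic difference is that the paper argues $s_2(A)=s_2(n)\ge 2$ via the identity $s_2(m)=s_2(2^c m)$, while you observe directly that the odd part is not a power of $2$; both are the same computation.
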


	\begin{proof}
		Let $\delta$, $\varepsilon$, and $A$ be as in Theorem \ref{maintheorem0}. If $n = 2^k$ for some $k \geq 1$, then we apply Theorem \ref{maintheorem0} with $a = 2$, $\delta = 0$, $\varepsilon = 1$, $A = 1$ leading to $\nu_2 \left( {4n \choose n}_F \right) = 0$, which implies that ${4n \choose n}_F$ is odd.
		
		Suppose $n$ is not a power of 2. By Theorem \ref{maintheorem0}, $\nu_2 \left( {4n \choose n}_F \right) = \delta + s_2 (A) - \varepsilon \geq s_2 (A) - 1$. Since $n$ is not a power of 2, the sum $s_2 (n) \geq 2$. It is easy to see that $s_2 (m) = s_2 (2^c m)$ for any $c,m \in \mathbb{N}$. Therefore $s_2 (A) = s_2 \left( \frac{n}{2^{\nu_2 (n)}} \right) = s_2 \left(2^{\nu_2 (n)} \cdot \frac{n}{2^{\nu_2 (n)}} \right) = s_2 (n) \geq 2$, which implies $\nu_2 \left( {4n \choose n}_F \right) \geq 1$, as required.
	\end{proof}

	Observe that $2,2^2,2^3$ are congruent to $2,4,1 \pmod{7}$, respectively. This implies that if $k \geq 1$ and $k \equiv 1 \pmod{3}$, then $(1 + 3 \cdot 2^k)/7$ is an integer. We can determine the integers $n$ such that ${8n \choose n}_F$ is odd as follows.
	
	\begin{corollary}\label{corollary2}
		${8n \choose n}_F$ is odd if and only if $n = \frac{1 + 3 \cdot 2^k}{7}$ for some $k \equiv 1 \pmod{3}$.
	\end{corollary}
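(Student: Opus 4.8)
The plan is to specialize Theorem~\ref{maintheorem0}(ii) to the case $a = 3$. Since $3$ is odd we have $\frac{a-1}{2} = 1$, so the theorem gives
\[
\nu_2\!\left({8n \choose n}_F\right) = \delta + s_2(A) - \varepsilon ,
\]
where $\varepsilon = [\,n \not\equiv 0 \pmod 3\,]$, $A = \big\lfloor \tfrac{7n}{3\cdot 2^{\nu_2(n)}} \big\rfloor$, and $\delta$ is read off from the four cases in part (ii). Since ${8n \choose n}_F$ is odd exactly when this valuation equals $0$, and since $s_2(A) \ge 1$ whenever $A \ge 1$ while $\delta \ge 0$ and $0 \le \varepsilon \le 1$, I would run through the six residue classes of $n$ modulo $6$ and show that only one of them can produce the value $0$.

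For the five classes $n \equiv 0,2,3,4,5 \pmod 6$ I would check directly that the valuation is positive. When $n \equiv 3 \pmod 6$ we have $\delta = 1$, $\varepsilon = 0$; when $n \equiv 5 \pmod 6$ we have $\delta = 2$, $\varepsilon = 1$; when $n \equiv 4 \pmod 6$ we have $\delta = \lceil \nu_2(n)/2\rceil + 1 \ge 2$, $\varepsilon = 1$; when $n \equiv 2 \pmod 6$ we have $\delta = \lceil (\nu_2(n)+1)/2 \rceil \ge 1$, $\varepsilon = 1$, and $A = \lfloor 7\ell/3\rfloor \ge 2$ with $\ell = n/2^{\nu_2(n)}$; and when $n \equiv 0 \pmod 6$ we have $\delta = 0 = \varepsilon$ but, writing $n = 2^{\nu_2(n)}\cdot 3m$ with $m$ odd, $A = 7m \ge 7$. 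In each case the right-hand side is at least $1$, so ${8n \choose n}_F$ is even.

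This leaves $n \equiv 1 \pmod 6$. Here $\nu_2(n) = 0$, $\varepsilon = 1$, $\delta = 0$, and $7n \equiv 1 \pmod 3$ forces $A = \tfrac{7n-1}{3}$ (so in particular $A \ge 2$), whence $\nu_2\big({8n \choose n}_F\big) = s_2(A) - 1$. This vanishes if and only if $A$ is a power of $2$, say $A = 2^k$, which is equivalent to $7n = 1 + 3\cdot 2^k$, i.e. $n = \tfrac{1+3\cdot 2^k}{7}$; and $\tfrac{1+3\cdot 2^k}{7} \in \mathbb{Z}$ precisely when $2^k \equiv 2 \pmod 7$, that is $k \equiv 1 \pmod 3$ (which in particular forces $k \ge 1$, so $7n$ is odd and $n$ is indeed odd). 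Conversely, for such $n$ one checks $7n \equiv 1 \pmod 3$ and $n$ odd, hence $n \equiv 1 \pmod 6$, so the computation above applies with $A = 2^k$ and the valuation is $0$; combined with the integrality remark stated just before the corollary, this is exactly the asserted equivalence. I expect no genuine obstacle here: the only thing to be careful about is the case bookkeeping for $\delta$, together with verifying $A \ge 1$ in the five ``even'' classes so that $s_2(A) \ge 1$ may be invoked.
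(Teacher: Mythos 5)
Your proposal is correct and follows essentially the same route as the paper: both specialize Theorem~\ref{maintheorem0}(ii) with $a=3$, eliminate every residue class of $n$ modulo $6$ except $n\equiv 1\pmod{6}$, and then reduce the odd-valuation condition to $A=\frac{7n-1}{3}$ being a power of $2$, with the integrality constraint forcing $k\equiv 1\pmod{3}$. Your six-way case split is just a slightly more explicit arrangement of the paper's progressive elimination argument.
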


	\begin{proof}
		Let $a,\delta, A, \varepsilon$ be as in Theorem \ref{maintheorem0}. We first suppose $n = (1 + 3 \cdot 2^k)/7$ where $k \geq 1$ and $k \equiv 1 \pmod{3}$. Then $n \equiv 7n \equiv 1 + 3 \cdot 2^k \equiv 1 \pmod{6}$. Then $a=3$, $\varepsilon = 1$, $\delta = 0$, $A = 2^k$, and so $\nu_{2} \left( {8n \choose n}_F \right) = 0$. Therefore ${8n \choose n}_F$ is odd. Next, assume that ${8n \choose n}_F$ is odd. Observe that $A \geq 2$ and $s_2 (A) > 0$. If $n \equiv 0 \pmod{3}$, then $\varepsilon = 0$ and $\nu_2 \left( {8n \choose n}_F \right) = \delta + s_2(A) > 0$, which is not the case. Therefore $n \equiv 1,2 \pmod{3}$, and so $\varepsilon = 1$. If $n\equiv 0 \pmod{2}$, then $\delta = \left\lceil \frac{\nu_2 (n) + 3 - n \bmod{3}}{2} \right\rceil \geq 1$, and so $\left( {8n \choose n} \right)_F \geq s_2 (A) >0$, which is a contradiction. So $n \equiv 1 \pmod{2}$. This implies $n \equiv 1,5 \pmod{6}$. But if $n \equiv 5 \pmod{6}$, then $\delta \geq 2$ and $\nu_2 \left( {8n \choose n}_F \right) > 0$, a contradiction. Hence $n \equiv 1 \pmod{6}$. Then $\delta = 0$. Since $s_2 (A) - 1 = \nu_2 \left( {8n \choose n}_F \right) = 0$, we see that $A = 2^k$ for some $k \geq 1$. Then $\frac{7n - 1}{3} = \left\lfloor \frac{7n}{3} \right\rfloor = A = 2^k$, which implies  $n = \frac{1 + 3 \cdot 2^k}{7}$, as required.
	\end{proof}
	
	\begin{theorem}\label{theoremp5}
		For each $a,n \in \mathbb{N}$, $\nu_5 \left( {5^a n \choose n}_F \right) = \nu_5 \left( {5^a n \choose n} \right) = \frac{s_5 ((5^a - 1)n)}{4}$. In particular, ${5^an \choose n}_F$ is divisible by 5 for every $a,n \in \mathbb{N}$.
	\end{theorem}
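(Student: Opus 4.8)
The plan is to reduce the whole statement to Legendre's formula after a single appeal to the Fibonomial-versus-binomial comparison. The first equality, $\nu_5\bigl({5^a n \choose n}_F\bigr) = \nu_5\bigl({5^a n \choose n}\bigr)$, is precisely Theorem~\ref{theorem2}(ii) applied with $m = 5^a n$ and $k = n$, so nothing has to be done there; this already carries all the ``Fibonacci content'' of the theorem. It then remains to evaluate the ordinary binomial valuation and to verify positivity.

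For the middle equality I would write ${5^a n \choose n} = \dfrac{(5^a n)!}{n!\,\bigl((5^a-1)n\bigr)!}$ and apply Legendre's formula (Lemma~\ref{Legendre}) in the shape $\nu_5(N!) = \frac{N - s_5(N)}{4}$ to each of the three factorials. Since $5^a n = n + (5^a - 1)n$, the three ``linear'' contributions cancel, and one is left with
\begin{equation*}
\nu_5\Bigl({5^a n \choose n}\Bigr) = \frac{-\,s_5(5^a n) + s_5(n) + s_5\bigl((5^a-1)n\bigr)}{4}.
\end{equation*}
Multiplying by $5^a$ only appends $a$ zero digits in base $5$, so $s_5(5^a n) = s_5(n)$; the first two terms cancel and the right-hand side collapses to $\frac{1}{4}\,s_5\bigl((5^a-1)n\bigr)$, which is the claimed formula.

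Finally, for the ``in particular'' assertion it is enough to observe that this quantity is at least $1$. As $a \ge 1$ and $n \ge 1$, the integer $(5^a - 1)n$ is positive, so $s_5\bigl((5^a-1)n\bigr) \ge 1$; and since $5 \equiv 1 \pmod 4$ one has $s_5(M) \equiv M \pmod 4$ for every integer $M$, while $(5^a-1)n \equiv 0 \pmod 4$, so in fact $s_5\bigl((5^a-1)n\bigr)$ is a positive multiple of $4$ and $\nu_5\bigl({5^a n \choose n}_F\bigr) \ge 1$. I do not expect any genuine obstacle here: the substantive input is Theorem~\ref{theorem2}(ii), which is already available, and the remainder is a direct digit-sum computation with Legendre's formula — the only point worth a second glance being the elementary fact that $s_5(5^a n) = s_5(n)$.
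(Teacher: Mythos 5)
Your proposal is correct and follows essentially the same route as the paper: Theorem~\ref{theorem2}(ii) for the first equality, then Legendre's formula in the digit-sum form applied to ${5^a n \choose n}$, using $s_5(5^a n) = s_5(n)$ to collapse the expression to $\frac{s_5((5^a-1)n)}{4}$. Your extra remark that $s_5((5^a-1)n)$ is a positive multiple of $4$ (via $s_5(M)\equiv M \pmod 4$ and $4 \mid 5^a-1$) is a clean justification of the divisibility claim, which the paper leaves implicit.
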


	\begin{proof}
		The first equality follows immediately from Theorem \ref{theorem2}(ii). By Legendre's formula, $\nu_5 \left( {n \choose k} \right) = \frac{s_5 (k) + s_5 (n - k) - s_5 (n)}{4}$ for all $n \geq k \geq 1$. So $\nu_5 \left( {5^a n \choose n}_F \right)$ is
		\begin{equation*}
			\frac{s_5 (n) + s_5 (5^a n - n) - s_5 (5^a n)}{4} = \frac{s_5 ((5^a - 1)n)}{4}. \qedhere
		\end{equation*}
	\end{proof}

	\begin{theorem}\label{maintheorem1}
		Let $p \neq 2,5$, $a, n \in \mathbb{N}$, $r = p^a n \bmod{z(p)}$, $s = n \bmod{z(p)}$, and $A = \left\lfloor \frac{n(p^a - 1)}{p^{\nu_p (n)}z(p)} \right\rfloor$. Then the following statements hold.
		\begin{itemize}
			\item[(i)] If $p \equiv \pm 1 \pmod{5}$, then $\nu_p \left( {p^a n \choose n}_F \right)$ is equal to
			\begin{equation}\label{eq0thm1}
				\frac{A}{p-1} - a\left\{ \frac{n}{p^{\nu_p (n)}z(p)} \right\} - \nu_p (A!) = \frac{s_p (A)}{p - 1} - a\left\{ \frac{n}{p^{\nu_p (n)}z(p)} \right\}.
			\end{equation}
			
			\item[(ii)] If $p \equiv \pm 2 \pmod{5}$ and $a$ is even, then $\nu_p \left( {p^a n \choose n}_F \right)$ is equal to
			\begin{equation}\label{eq00thm1}
				 \frac{A}{p-1} - \frac{a}{2}[s \neq 0] - \nu_p (A!) = \frac{s_p (A)}{p - 1} - \frac{a}{2}[s \neq 0].
			\end{equation}
			
			\item[(iii)] If $p \equiv \pm 2 \pmod{5}$ and $a$ is odd, then $\nu_p \left( {p^a n \choose n}_F \right)$ is equal to
			\begin{equation}\label{eq000thm1}
				\left\lfloor \frac{A}{p - 1} \right\rfloor - \frac{a-1}{2}[s \neq 0] - \nu_p (A!) + \delta,
			\end{equation}
			where $\delta = \left( \left\lfloor \frac{\nu_p (n)}{2} \right\rfloor + [2 \nmid \nu_p(n)][r>s] + [r < s]\nu_p(F_{z(p)}) \right)[r \neq s]$, or equivalently, $\delta = 0$ if $r = s$, $\delta = \left\lfloor \frac{\nu_p (n)}{2} \right\rfloor + \nu_p (F_{z(p)})$ if $r < s$, and $\delta = \left\lceil \frac{\nu_p (n)}{2} \right\rceil$ if $r > s$.
		\end{itemize}
	\end{theorem}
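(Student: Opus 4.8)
The plan is to feed $\binom{p^a n}{n}_F$ into the structural results of Section~\ref{sec2} and then evaluate, via Theorem~\ref{theorem1} and Legendre's formula, every factorial valuation that appears. The second equalities in \eqref{eq0thm1} and \eqref{eq00thm1} are immediate from Legendre's formula, since $\frac{A}{p-1}-\nu_p(A!)=\frac{s_p(A)}{p-1}$, so it suffices to prove the first equalities together with \eqref{eq000thm1}. By Lemma~\ref{lemma1}(ii), $p\equiv 1\pmod{z(p)}$ when $p\equiv\pm1\pmod5$ and $p\equiv -1\pmod{z(p)}$ when $p\equiv\pm2\pmod5$, and by Lemma~\ref{lemma1}(iii), $p$ is a unit modulo $z(p)$, so $r\equiv p^as\pmod{z(p)}$ and $r=0\iff s=0\iff z(p)\mid n$. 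Consequently $r=s$ unless $p\equiv-1\pmod{z(p)}$ and $a$ is odd, and in that exceptional situation $r=z(p)-s$ for $s\neq0$, so the trichotomy $r>s$, $r=s$, $r<s$ is the trichotomy $s<z(p)/2$, $s=z(p)/2$, $s>z(p)/2$. I treat $p\nmid n$ and $p\mid n$ separately.

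\textbf{Case $p\nmid n$.} Here $\nu_p(n)=0$ and $A=\lfloor n(p^a-1)/z(p)\rfloor$. Put $m'=\lfloor p^an/z(p)\rfloor$ and $k'=\lfloor n/z(p)\rfloor$. Theorem~\ref{theorem2}(iii), applied with $m=p^an$, $k=n$, and using $\lfloor((p^a-1)n+z(p))/z(p)\rfloor=A+1$, gives
\begin{equation*}
\nu_p\!\left(\binom{p^an}{n}_F\right)=\nu_p\!\binom{m'}{k'}+[r<s]\bigl(\nu_p(A+1)+\nu_p(F_{z(p)})\bigr).
\end{equation*}
Expanding $\nu_p\binom{m'}{k'}=\nu_p(m'!)-\nu_p(k'!)-\nu_p((m'-k')!)$, the floor identities of Lemma~\ref{floorlemma} give $m'-k'=A$ when $r\ge s$ and $m'-k'=A+1$ when $r<s$, so $\nu_p((m'-k')!)=\nu_p(A!)+[r<s]\nu_p(A+1)$ and the term $[r<s]\nu_p(A+1)$ cancels. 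Evaluating $\nu_p(m'!)=\nu_p(\lfloor np^a/z(p)\rfloor!)$ by Theorem~\ref{theorem1} (with $\ell=n$, $m=z(p)$), the summand $\nu_p(\lfloor n/z(p)\rfloor!)$ it produces is exactly $\nu_p(k'!)$ and cancels as well; what remains is $\frac{n(p^a-1)}{z(p)(p-1)}$, minus the pertinent correction term of Theorem~\ref{theorem1}, minus $\nu_p(A!)$. Reading this off in the three cases $p\equiv1\pmod{z(p)}$; $p\equiv-1\pmod{z(p)}$ with $a$ even; $p\equiv-1\pmod{z(p)}$ with $a$ odd --- and noting that $\nu_p(n)=0$ kills the $\lfloor\nu_p(n)/2\rfloor$ and $\lceil\nu_p(n)/2\rceil$ summands of $\delta$ --- yields \eqref{eq0thm1}, \eqref{eq00thm1}, and \eqref{eq000thm1}.

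\textbf{Case $p\mid n$.} Write $n=p^b\ell$ with $b=\nu_p(n)\ge1$ and $p\nmid\ell$; then $\binom{p^an}{n}_F=\binom{\ell p^{a+b}}{\ell p^b}_F$, $A=\lfloor\ell(p^a-1)/z(p)\rfloor$, $n/(p^bz(p))=\ell/z(p)$, and I apply Theorem~\ref{theorem3}(ii) with $m_p=\lfloor\ell p^a/z(p)\rfloor$, $k_p=\lfloor\ell/z(p)\rfloor$ (its ``$b$'' being $a+b$ and its ``$a$'' being $b$). If $p\equiv1\pmod{z(p)}$, or $z(p)\mid\ell$, or $p\equiv-1\pmod{z(p)}$ with $a$ even, then $r=s$ and the answer is just $\nu_p\binom{m_p}{k_p}$, evaluated exactly as in the previous case (with $\ell$ for $n$), giving \eqref{eq0thm1} and \eqref{eq00thm1}. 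Otherwise $p\equiv-1\pmod{z(p)}$, $a$ is odd, and $z(p)\nmid\ell$; here one of the remaining cases of Theorem~\ref{theorem3}(ii) applies (including $r=s$, which forces $s=z(p)/2$, $\delta=0$, and is possible only when $z(p)$ is even), contributing one of $\frac b2,\frac{b\pm1}2$ together with $\nu_p(m_p-k_p)$ and/or $\nu_p(F_{z(p)})$ in some cases. Combining this with $\nu_p\binom{m_p}{k_p}$ (obtained from the third case of Theorem~\ref{theorem1}), using $m_p-k_p\in\{A,A+1\}$, and matching the parity of $b$ to the sign of $r-s$, one collects the pieces into $\lfloor A/(p-1)\rfloor-\frac{a-1}2\,[s\neq0]-\nu_p(A!)+\delta$ with $\delta$ as stated (the $\lfloor b/2\rfloor$ or $\lceil b/2\rceil$ coming from $\frac b2$ or $\frac{b\pm1}2$, and $\nu_p(F_{z(p)})$ surviving precisely when $r<s$).

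\textbf{Main obstacle.} The crux is part (iii): one must carry the parities of both $a$ and $b=\nu_p(n)$ and the sign of $r-s$ through Theorems~\ref{theorem3}(ii) and~\ref{theorem1} simultaneously, and, above all, explain why $\lfloor A/(p-1)\rfloor$ rather than $\frac{A}{p-1}$ appears. This reduces to the identity
\begin{equation*}
\frac{\ell(p^a-1)}{z(p)(p-1)}-\left\{\frac{\ell}{z(p)}\right\}=\left\lfloor\frac{A}{p-1}\right\rfloor\qquad(\text{and its $p\nmid n$ analogue with $n$ for $\ell$}),
\end{equation*}
which I would prove by writing $\frac{\ell(p^a-1)}{z(p)}=A+\{\ell(p^a-1)/z(p)\}$, computing this fractional part explicitly (since $p^a\equiv-1\pmod{z(p)}$ it equals $((-2s')\bmod z(p))/z(p)$ for $s'=\ell\bmod z(p)$, an explicit linear expression in $s'$ over $z(p)$ after using $z(p)\mid p+1$), and then invoking the integrality of $\nu_p\binom{p^an}{n}_F$ (with $\frac{a-1}2,\nu_p(A!)\in\mathbb{Z}$) to conclude $(p-1)\mid(A+1-cs')$ or $(p-1)\mid(A+2-cs')$ with $c=(p+1)/z(p)$; bounding $cs'$ then pins $A\bmod(p-1)$ exactly and identifies the left-hand side with $\lfloor A/(p-1)\rfloor$. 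Verifying this merger in each of the finitely many sub-cases, together with the boundary sub-cases $s=0$, $z(p)\mid\ell$, and $s=z(p)/2$ (where $r=s$, $\delta=0$, and one checks $(p-1)\mid A$ so that the floor is harmless), is the bulk of the remaining work.
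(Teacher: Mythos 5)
Your skeleton matches the paper's proof almost exactly: split on $p\nmid n$ versus $p\mid n$, feed the Fibonomial into Theorem~\ref{theorem2}(iii) (resp.\ Theorem~\ref{theorem3}(ii)), evaluate the resulting factorials with Theorem~\ref{theorem1}, observe that $m'-k'\in\{A,A+1\}$ according to the sign of $r-s$ so that the stray $\nu_p(A+1)$ cancels, and track the parities of $a$ and $b=\nu_p(n)$ to assemble $\delta$. Parts (i) and (ii) are fine as sketched. The problem is in part (iii), precisely at the point you yourself flag as the crux.

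The identity $\frac{\ell(p^a-1)}{z(p)(p-1)}-\left\{\frac{\ell}{z(p)}\right\}=\left\lfloor\frac{A}{p-1}\right\rfloor$ is not proved in your proposal, and the route you sketch for it does not close. Invoking the integrality of $\nu_p\binom{p^an}{n}_F$ only tells you that $\frac{A+f}{p-1}-\frac{s'}{z(p)}$ is an integer (where $f=\{\ell(p^a-1)/z(p)\}$); since that quantity lies in the open interval $\left(\frac{A}{p-1}-1,\frac{A}{p-1}+1\right)$, when $(p-1)\nmid A$ it could a priori be either $\lfloor A/(p-1)\rfloor$ or $\lfloor A/(p-1)\rfloor+1$, and your proposed ``bounding $cs'$'' step is not enough as stated to exclude the second possibility; you would in effect have to determine $A\bmod(p-1)$ by an independent computation, which is the very thing you are trying to avoid. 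The clean resolution (and the one the paper uses) is elementary: since $p\equiv-1\pmod{z(p)}$ and $a$ is odd, $\frac{p^a-1}{p-1}=1+p+\cdots+p^{a-1}\equiv 1-1+\cdots+1\equiv 1\pmod{z(p)}$, hence $\frac{\ell(p^a-1)}{p-1}\equiv\ell\pmod{z(p)}$ and therefore $\left\{\frac{\ell(p^a-1)}{z(p)(p-1)}\right\}=\left\{\frac{\ell}{z(p)}\right\}$; subtracting the fractional part yields $\left\lfloor\frac{\ell(p^a-1)}{z(p)(p-1)}\right\rfloor$, which equals $\left\lfloor\frac{A}{p-1}\right\rfloor$ by Lemma~\ref{floorlemma}(vi). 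With that two-line observation in place, the rest of your case analysis (the four combinations of the sign of $r-s$ with the parity of $b$, each contributing $\frac b2$ or $\frac{b\pm1}2$ plus possibly $\nu_p(F_{z(p)})$) assembles $\delta$ exactly as claimed, but as written the proposal leaves its hardest step unestablished.
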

		
	\begin{proof}
		We first prove (i) and (ii). So we suppose that the hypothesis of (i) or (ii) is true. By writing $\nu_p (A!) = \frac{A - s_p (A)}{p - 1}$, we obtain the equalities in \eqref{eq0thm1} and \eqref{eq00thm1}. By Lemma \ref{lemma1}(ii), $p^a \equiv 1\pmod{z(p)}$. Then $r=s$.
		
		\noindent{\bf Case 1.} $p \nmid n $. Let $m=\left\lfloor\frac{p^a n}{z(p)}\right\rfloor$ and $k = \left\lfloor\frac{n}{z(p)}\right\rfloor$. Then we obtain by Theorem \ref{theorem2}(iii) that 
		\begin{equation}\label{eq1thm1}
			\nu_p \left( {p^a n \choose n}_F \right) = \nu_p \left( {m \choose k} \right) = \nu_p(m!) - \nu_p (k!) - \nu_p ((m-k)!).
		\end{equation}
		By Lemma \ref{lemma1}(ii) and Theorem \ref{theorem1}, we see that if $p \equiv \pm 1 \pmod{5}$, then $p \equiv 1 \pmod{z(p)}$ and
		\begin{equation}\label{eq2thm1}
			\nu_p (m!) = \nu_p\left( \left\lfloor \frac{n p^a}{z(p)} \right\rfloor! \right) = \frac{n(p^a - 1)}{z(p)(p-1)} - a\left\{ \frac{n}{z(p)} \right\} + \nu_p\left( k ! \right),
		\end{equation}
		and if $p \equiv \pm 2 \pmod{5}$ and $a$ is even, then $p \equiv -1 \pmod{z(p)}$ and
		\begin{equation}\label{eq3thm1}
			\nu_p (m!) = \frac{n(p^a - 1)}{z(p)(p-1)} - \frac{a}{2}[s \neq 0] + \nu_p\left( k ! \right).
		\end{equation}
		Since $z(p) \mid p^a - 1$ and $p\nmid n$, $A = \frac{n(p^a -1)}{z(p)}$. Therefore
		\begin{equation}\label{eq3.5thm1}
			m-k = \left\lfloor\frac{p^a n}{z(p)}\right\rfloor - \left\lfloor\frac{n}{z(p)}\right\rfloor = \frac{p^a n - r}{z(p)} - \frac{n - s}{z(p)} = \frac{n(p^a - 1)}{z(p)}=A.
		\end{equation}
		Substituting \eqref{eq2thm1}, \eqref{eq3thm1}, and \eqref{eq3.5thm1} in \eqref{eq1thm1}, we obtain \eqref{eq0thm1} and \eqref{eq00thm1}. 
		
		\noindent{\bf Case 2.} $p \mid n $. Let $n = p^b \ell$ where $p \nmid \ell$, $m=\left\lfloor\frac{\ell p^a}{z(p)}\right\rfloor$, and $k=\left\lfloor\frac{\ell}{z(p)}\right\rfloor$. Since $r=s$, we obtain by Theorem \ref{theorem3} that $\nu_p \left( {p^a n \choose n}_F \right)$ is equal to
		\begin{equation}\label{eq4thm1}
			\nu_p \left( {\ell p^{a + b} \choose \ell p^{b}}_F \right) = \nu_p \left( {m \choose k} \right) = \nu_p (m!) - \nu_p (k!) - \nu_p ((m-k)!).
		\end{equation}
		Since $\gcd (p,z(p)) = 1$, we see that $\ell \equiv 0 \pmod{z(p)} \Leftrightarrow n \equiv 0 \pmod{z(p)} \Leftrightarrow s = 0$. Similar to Case 1, we have $\nu_p(m!) = \frac{\ell(p^a - 1)}{z(p)(p-1)} - a\left\{ \frac{\ell}{z(p)} \right\} + \nu_p (k!)$ if $p \equiv \pm 1 \pmod{5}$, $\nu_p (m!) = \frac{\ell(p^a - 1)}{z(p)(p-1)} - \frac{a}{2}[s \neq 0] + \nu_p\left( k!\right)$ if $p \equiv \pm 2 \pmod{5}$ and $a$ is even, $\ell p^a \equiv \ell \pmod{z(p)}$, $A = \frac{\ell (p^a - 1)}{z(p)}$, and $m-k = A$. So \eqref{eq4thm1} leads to \eqref{eq0thm1} and \eqref{eq00thm1}. This proves (i) and (ii).
		
		To prove (iii), suppose that $p \equiv \pm 2 \pmod{5}$ and $a$ is odd. By Lemma \ref{lemma1}(ii), $p \equiv -1 \pmod{z(p)}$. In addition, $\frac{p^a - 1}{p - 1} = p^{a-1} + p^{a-2} + \ldots + 1 \equiv 1 \pmod{z(p)}$. We divide the consideration into two cases.
		
		\noindent{\bf Case 1.} $p \nmid n $. This case is similar to Case 1 of the previous part. So we apply Theorems \ref{theorem1} and \ref{theorem2}(iii). Let $m = \left\lfloor\frac{p^a n}{z(p)}\right\rfloor$ and $k = \left\lfloor\frac{n}{z(p)}\right\rfloor$. Then
		\begin{equation*}
			\nu_p(m!) = \frac{n (p^a - 1)}{z(p)(p-1)} - \frac{a-1}{2}[s \neq 0] - \left\{ \frac{n}{z(p)} \right\} + \nu_p (k!),
		\end{equation*}
		\begin{equation*}
			m - k = \frac{p^a n -r}{z(p)} - \frac{n-s}{z(p)} = \frac{n(p^a-1) - (r-s)}{z(p)},
		\end{equation*}
		\begin{equation*}
			A = \left\lfloor \frac{np^a-r}{z(p)} - \frac{n - s}{z(p)} + \frac{r-s}{z(p)} \right\rfloor= m - k + \left\lfloor \frac{r-s}{z(p)} \right\rfloor.
		\end{equation*}
		Since $\frac{p^a - 1}{p - 1} \equiv 1 \pmod{z(p)}$, $\frac{n(p^a - 1)}{p-1} \equiv n \pmod{z(p)}$. This implies that $\left\{ \frac{n(p^a - 1)}{z(p)(p-1)} \right\} = \left\{ \frac{n}{z(p)} \right\}$. Therefore
		\begin{align*}
			\nu_p(m!) &= \left\lfloor \frac{n (p^a - 1)}{z(p)(p-1)} \right\rfloor - \frac{a-1}{2}[s \neq 0] + \nu_p (k!) = \left\lfloor \frac{A}{p-1} \right\rfloor - \frac{a-1}{2}[s \neq 0] + \nu_p (k!).
		\end{align*}
		From the above observation, if $r\geq s$, then $A=m-k$ and
		\begin{equation*}
			\nu_p \left( {p^a n \choose n}_F \right) = \nu_p \left( {m \choose k} \right) = \left\lfloor \frac{A}{p-1} \right\rfloor - \frac{a-1}{2}[s \neq 0] - \nu_p (A!),
		\end{equation*}
		which leads to \eqref{eq000thm1}. If $r < s$, then $A = m - k - 1$, $\left\lfloor \frac{p^a n - n + z(p)}{z(p)} \right\rfloor = A + 1$, and $\nu_p \left( {p^a n \choose n}_F \right)$ is equal to
		\begin{align*}
			& \left\lfloor \frac{A}{p-1} \right\rfloor - \frac{a-1}{2}[s \neq 0] - \nu_p ((A+1)!) + \nu_p(A+1) + \nu_{p} (F_{z(p)}) \\
			&= \left\lfloor \frac{A}{p-1} \right\rfloor - \frac{a-1}{2}[s \neq 0] - \nu_p (A!) + \nu_p (F_{z(p)}),
		\end{align*}
		which is the same as \eqref{eq000thm1}.
		
		\noindent{\bf Case 2.} $p \mid n $. Let $n=p^b \ell$ where $p \nmid \ell$, $m = \left\lfloor \frac{\ell p^a}{z(p)} \right\rfloor$, and $k = \left\lfloor \frac{\ell}{z(p)} \right\rfloor$. Similar to Case 1, $ s = 0 \Leftrightarrow \ell \equiv 0 \pmod{z(p)}$. In addition, $\frac{\ell(p^a - 1)}{p - 1} \equiv \ell \pmod{z(p)}$, and so we obtain by Theorem \ref{theorem1} that $\nu_p (m!) = \left\lfloor \frac{A}{p-1} \right\rfloor - \frac{a-1}{2}[s \neq 0] + \nu_p (k!)$. The calculation of $\nu_p \left( {p^a n \choose n}_F \right) = \nu_p \left( {\ell p^{a+b} \choose \ell p^b}_F \right)$ is done by the applications of Theorem \ref{theorem3} and is divided into several cases. Suppose $r = s$. Then $p^{a+b} \ell \equiv p^a n \equiv r \equiv s \equiv n \equiv p^b \ell \pmod{z(p)}$. Since $(p,z(p)) = 1$, this implies $\ell p^a \equiv \ell \pmod{z(p)}$. Therefore $A = \left\lfloor \frac{\ell p^a - \ell}{z(p)} \right\rfloor = \frac{\ell p^a - \ell}{z(p)} = m - k$ and 
		\begin{equation*}
			\nu_p \left( {p^a n \choose n}_F \right) = \nu_p \left( {m \choose k} \right) = \nu_p (m!) - \nu_p (k!) - \nu_p ((m-k)!),
		\end{equation*}
		which is \eqref{eq000thm1}. Obviously, if $\ell \equiv 0 \pmod{z(p)}$, then $r=s$, which is already done. So from this point on, we assume that $r \neq s$ and $\ell \not\equiv 0 \pmod{z(p)}$. Recall that $p \equiv -1 \pmod{z(p)}$ and $a$ is odd. So if $b$ is odd, then 
		\begin{equation*}
			r \equiv np^a \equiv -n \equiv -p^b \ell \equiv \ell \pmod{z(p)}, \ \ s \equiv n \equiv p^b \ell \equiv -\ell \equiv \ell p^a \pmod{z(p)}, \ \  \text{and} 
		\end{equation*}
		\begin{equation*}
			A = \left\lfloor \frac{\ell p^a - s}{z(p)} - \frac{\ell - r}{z(p)} + \frac{s-r}{z(p)} \right\rfloor = \frac{\ell p^a - s}{z(p)} - \frac{\ell - r}{z(p)} + \left\lfloor \frac{s - r}{z(p)} \right\rfloor = m - k + \left\lfloor \frac{s - r}{z(p)} \right\rfloor.
		\end{equation*} 
		Similarly, if $b$ is even, then $r = \ell p^a \bmod{z(p)}$, $s = \ell \bmod{z(p)}$, and $A = m - k + \left\lfloor \frac{r - s}{z(p)} \right\rfloor$. Let $R = \left\lfloor \frac{A}{p-1} \right\rfloor - \frac{a-1}{2}[s \neq 0] - \nu_p (A!) + \delta$ be the quantity in \eqref{eq000thm1}. From the above observation and the application of Theorem \ref{theorem3}, we obtain $\nu_p \left( {p^a n \choose n}_F \right)$ as follows. If $r > s$ and $b$ is even, then $A = m - k$ and
		\begin{equation*}
			\nu_p \left( {p^a n \choose n}_F \right) = \frac{b}{2} + \nu_p \left( {m \choose k} \right) = \frac{b}{2} + \left\lfloor \frac{A}{p-1} \right\rfloor - \frac{a-1}{2}[s \neq 0] - \nu_{p}(A!) = R.
		\end{equation*}
		If $r > s$ and $b$ is odd, then $A = m - k - 1$ and
		\begin{align*}
			\nu_p \left( {p^a n \choose n}_F \right) &= \frac{b + 1}{2} + \nu_p (A+1) + \nu_p \left( {m \choose k} \right) \\
			&= \frac{b+1}{2} + \left\lfloor \frac{A}{p-1} \right\rfloor - \frac{a-1}{2}[s \neq 0] - \nu_p (A!) = R.
		\end{align*}
		If $r < s$ and $b$ is even, then $A = m - k - 1$ and 
		\begin{align*}
			\nu_p \left( {p^a n \choose n}_F \right) &= \frac{b}{2} + \nu_p \left( F_{z(p)} \right) + \nu_p (A+1) + \nu_p \left( {m \choose k} \right) \\
			&= \frac{b}{2} + \nu_p (F_{z(p)}) + \left\lfloor \frac{A}{p-1} \right\rfloor - \frac{a-1}{2}[s \neq 0] - \nu_p (A!) = R.
			\end{align*}
		If $r < s$ and $b$ is odd, then $A = m - k$ and 
		\begin{align*}
			\nu_p \left( {p^a n \choose n}_F \right) &= \frac{b-1}{2} + \nu_p \left( F_{z(p)} \right) + \nu_p \left( {m \choose k} \right) \\
			&= \frac{b-1}{2} + \nu_p (F_{z(p)}) + \left\lfloor \frac{A}{p-1} \right\rfloor - \frac{a-1}{2}[s \neq 0] - \nu_p (A!) = R.
		\end{align*}
		This completes the proof.
	\end{proof}

	In the next two corollaries, we give some characterizations of the integers $n$ such that ${p^a n \choose n}_F$ is divisible by $p$.

	\begin{corollary}
		Let $p$ be a prime and let $a$ and $n$ be positive integers. If $n \equiv 0 \pmod{z(p)}$, then $p \mid {p^a n \choose n}_F$.
	\end{corollary}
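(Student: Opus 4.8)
The plan is to dispose of the cases $p=5$, $p=2$, and $p$ an odd prime different from $5$ separately, each by specializing the relevant main theorem to the hypothesis $s:=n\bmod z(p)=0$. The uniform engine is the following observation: since $\gcd(p,z(p))=1$ by Lemma~\ref{lemma1}(iii), the assumption $z(p)\mid n$ together with $p^{\nu_p(n)}\mid n$ yields $p^{\nu_p(n)}z(p)\mid n$, so that $t:=n/\bigl(p^{\nu_p(n)}z(p)\bigr)$ is a positive integer and the quantity $A=\left\lfloor n(p^a-1)/(p^{\nu_p(n)}z(p))\right\rfloor$ appearing in Theorems~\ref{maintheorem0} and~\ref{maintheorem1} is exactly $(p^a-1)t$, a positive multiple of $p-1$. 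Hence $s_p(A)\equiv A\equiv 0\pmod{p-1}$ while $s_p(A)\ge 1$, so $s_p(A)\ge p-1$; this is the inequality that forces the valuation to be at least $1$ in every case.

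For $p=5$ we have $z(5)=5$, so the hypothesis reads $5\mid n$ (which we will not even need). By Theorem~\ref{theoremp5}, $\nu_5\left({5^a n \choose n}_F\right)=s_5((5^a-1)n)/4$; since $(5^a-1)n$ is a positive multiple of $4$ and the base-$5$ digit sum of an integer is congruent to that integer modulo $4$, the number $s_5((5^a-1)n)$ is a positive multiple of $4$, hence $\ge 4$, and so $\nu_5\ge 1$. For $p=2$ we have $z(2)=3$, so $3\mid n$ and therefore $\varepsilon=[n\not\equiv 0\pmod{3}]=0$; Theorem~\ref{maintheorem0} then collapses, for both parities of $a$, to $\nu_2\left({2^a n \choose n}_F\right)=\delta+s_2(A)$ with $\delta\ge 0$. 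Since $A=(2^a-1)t\ge 2^a-1\ge 1$ we get $s_2(A)\ge 1$, hence $\nu_2\ge 1$.

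For odd $p\ne 5$ we have $s=0$ and also $r=p^a n\bmod z(p)=0$, so in Theorem~\ref{maintheorem1} every bracket $[s\neq 0]$ vanishes and $\delta=0$ in part~(iii). In parts~(i) and~(ii) the fractional part $\{ n/(p^{\nu_p(n)} z(p)) \}$ is $0$ as well, and the formula reduces to $\nu_p\left({p^a n \choose n}_F\right)=s_p(A)/(p-1)\ge 1$ by the first paragraph. Part~(iii), i.e.\ $p\equiv\pm 2\pmod{5}$ with $a$ odd, is the one spot needing a brief extra step: there the formula is $\lfloor A/(p-1)\rfloor-\nu_p(A!)$, and I would substitute $\nu_p(A!)=(A-s_p(A))/(p-1)$ together with $\lfloor A/(p-1)\rfloor=(A-(A\bmod(p-1)))/(p-1)$, use $A\bmod(p-1)=0$ (because $p-1\mid A$), and thereby recover $\nu_p=s_p(A)/(p-1)\ge 1$ once more. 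Recognizing that this floor-minus-factorial expression simplifies precisely because $A$ is a multiple of $p^a-1$ is the only mildly delicate point; the remainder is routine bookkeeping with the three main theorems.
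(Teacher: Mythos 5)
Your proposal is correct and takes essentially the same route as the paper: split into $p=5$, $p=2$, and odd $p\neq 5$, and specialize Theorem \ref{theoremp5}, Theorem \ref{maintheorem0}, and Theorem \ref{maintheorem1} to the situation $r=s=0$, where every case of the valuation formula collapses to $s_p(A)/(p-1)>0$ (respectively $\delta+s_2(A)>0$ for $p=2$). Your preliminary observations that $p^{\nu_p(n)}z(p)\mid n$ (via $\gcd(p,z(p))=1$) and that $s_p(A)\equiv A\equiv 0\pmod{p-1}$, hence $s_p(A)\ge p-1$, simply make explicit details that the paper's proof leaves implicit.
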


	\begin{proof}
		We first consider the case $p \neq 2,5$. Assume that $n \equiv 0 \pmod{z(p)}$ and $r$, $s$, $A$, and $\delta$ are as in Theorem \ref{maintheorem1}. Then $\frac{n}{p^{\nu_p(n)}z(p)}, \frac{A}{p-1} \in \mathbb{Z}$, $r = s = 0$, and $\delta = 0$. Every case in Theorem \ref{maintheorem1} leads to  $\nu_p \left( {p^a n \choose n}_F \right) = \frac{s_p (A)}{p-1} > 0$, which implies $p \mid {p^a n \choose n}_F$. If $p = 5$, then the result follows immediately from Theorem \ref{theoremp5}. If $p = 2$, then every case of Theorem \ref{maintheorem0} leads to $\nu_2 \left( {2^a n \choose n}_F \right) \geq s_2 (A) > 0$, which implies the desired result.
	\end{proof}

	\begin{corollary}
		Let $p \neq 2,5$ be a prime and let $a$, $n$, $r$, $s$, and $A$ be as in Theorem \ref{maintheorem1}. Assume that $p \equiv \pm 2 \pmod{5}$ and $n \not \equiv 0 \pmod{z(p)}$. Then the following statements hold.
		\begin{itemize}
			\item[(i)] Assume that $a$ is even. Then $p \mid {p^a n \choose n}_F$ if and only if $s_p (A) > \frac{a}{2} (p - 1)$.
			\item[(ii)] Assume that $a$ is odd and $p \nmid n$. If $r < s$, then $p \mid {p^a n \choose n}_F$. If $r \geq s$, then $p \mid {p^a n \choose n}_F$ if and only if $s_p (A) \geq \frac{a + 1}{2} (p - 1)$.
			\item[(iii)] Assume that $a$ is odd and $p \mid n$. If $r\neq s$, then $p \mid {p^a n \choose n}_F$. If $r = s$, then $p \mid {p^a n \choose n}_F$ if and only if $s_p (A) \geq \frac{a + 1}{2} (p - 1)$.
		\end{itemize}
	\end{corollary}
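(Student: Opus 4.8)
The plan is to read each case of the corollary off Theorem~\ref{maintheorem1} in the branches where $r = s$, and off Theorems~\ref{theorem2}(iii) and \ref{theorem3}(ii) in the branches where $r \neq s$, using throughout that $\nu_p\left( {p^a n \choose n}_F \right)$ is a nonnegative integer, so that $p \mid {p^a n \choose n}_F$ is equivalent to $\nu_p\left( {p^a n \choose n}_F \right) \geq 1$. Two elementary observations will be used repeatedly: since $n \not\equiv 0 \pmod{z(p)}$ we have $s \neq 0$, so every Iverson bracket $[s \neq 0]$ equals $1$; and, by Legendre's formula $\nu_p(A!) = \frac{A - s_p(A)}{p-1}$, so from $A \equiv s_p(A) \pmod{p-1}$ we get the identity $\left\lfloor \frac{A}{p-1} \right\rfloor - \nu_p(A!) = \frac{s_p(A)}{p-1} - \left\{ \frac{A}{p-1} \right\} = \left\lfloor \frac{s_p(A)}{p-1} \right\rfloor$. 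As a warm-up, part (i) is immediate from Theorem~\ref{maintheorem1}(ii): with $[s \neq 0] = 1$ it gives $\nu_p\left( {p^a n \choose n}_F \right) = \frac{s_p(A)}{p-1} - \frac{a}{2}$, a nonnegative integer, which is positive exactly when $\frac{s_p(A)}{p-1} > \frac{a}{2}$, i.e.\ when $s_p(A) > \frac{a}{2}(p-1)$.

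Next I handle the branches with $r = s$, using Theorem~\ref{maintheorem1}(iii). In part (ii) we are given $p \nmid n$, hence $\nu_p(n) = 0$, and then the formula for $\delta$ in Theorem~\ref{maintheorem1}(iii) reduces to $\delta = [r < s]\,\nu_p(F_{z(p)})\,[r \neq s]$, which vanishes when $r \geq s$; in part (iii) the hypothesis $r = s$ directly gives $\delta = 0$. With $\delta = 0$ and $[s \neq 0] = 1$, combining \eqref{eq000thm1} with the identity above gives
\begin{equation*}
	\nu_p\left( {p^a n \choose n}_F \right) = \left\lfloor \frac{s_p(A)}{p-1} \right\rfloor - \frac{a-1}{2}.
\end{equation*}
Since $a$ is odd, $\frac{a+1}{2}$ is an integer, so this quantity is $\geq 1$ if and only if $\left\lfloor \frac{s_p(A)}{p-1} \right\rfloor \geq \frac{a+1}{2}$, if and only if $\frac{s_p(A)}{p-1} \geq \frac{a+1}{2}$, if and only if $s_p(A) \geq \frac{a+1}{2}(p-1)$. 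This proves the assertion of (ii) when $r \geq s$ (the subcase $r > s$ is included because $\nu_p(n) = 0$ still forces $\delta = 0$) and the assertion of (iii) when $r = s$.

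The remaining branches, $r < s$ in (ii) and $r \neq s$ in (iii), form the only delicate point: there \eqref{eq000thm1} still applies but carries the negative term $-\frac{a-1}{2}$, so positivity of the valuation is not transparent from Theorem~\ref{maintheorem1} alone, and the remedy is to go back to the results from which that theorem was derived, where the valuation is displayed as a sum of manifestly nonnegative quantities. If $p \nmid n$ and $r < s$, apply Theorem~\ref{theorem2}(iii) with $m \mapsto p^a n$ and $k \mapsto n$ (legitimate since $p \neq 2, 5$); writing $m' = \left\lfloor \frac{p^a n}{z(p)} \right\rfloor$ and $k' = \left\lfloor \frac{n}{z(p)} \right\rfloor$, the branch $[r < s] = 1$ gives
\begin{equation*}
	\nu_p\left( {p^a n \choose n}_F \right) = \nu_p\left( {m' \choose k'} \right) + \nu_p\left( \left\lfloor \frac{p^a n - n + z(p)}{z(p)} \right\rfloor \right) + \nu_p(F_{z(p)}),
\end{equation*}
and since the first two summands are nonnegative while $\nu_p(F_{z(p)}) \geq 1$ by the definition of $z(p)$, we conclude $\nu_p\left( {p^a n \choose n}_F \right) \geq 1$, i.e.\ $p \mid {p^a n \choose n}_F$. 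If $p \mid n$ and $r \neq s$, write $n = p^b \ell$ with $p \nmid \ell$ and $b = \nu_p(n) \geq 1$, so that ${p^a n \choose n}_F = {\ell p^{a+b} \choose \ell p^b}_F$, and apply the part of Theorem~\ref{theorem3}(ii) corresponding to $p \equiv \pm 2 \pmod 5$. Since $\gcd(p, z(p)) = 1$ by Lemma~\ref{lemma1}(iii), the hypothesis $n \not\equiv 0 \pmod{z(p)}$ forces $\ell \not\equiv 0 \pmod{z(p)}$; together with $r \neq s$ and the fact that here $\ell_1 = \ell_2 = \ell$, we land in one of the last four displayed cases of that theorem, in each of which $\nu_p\left( {p^a n \choose n}_F \right)$ is a sum of nonnegative terms one of which equals $\frac{b}{2}$, or $\frac{b+1}{2}$, or $\frac{b-1}{2} + \nu_p(F_{z(p)})$. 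As $b \geq 1$ and $\nu_p(F_{z(p)}) \geq 1$, each such sum is $\geq 1$, so $p \mid {p^a n \choose n}_F$ in these branches as well. The crux of the whole argument is this final reversion: Theorem~\ref{maintheorem1} by itself does not exhibit the sign of the valuation in the $r \neq s$ branches, and one must fall back on Theorems~\ref{theorem2} and \ref{theorem3} to see that it is at least $1$.
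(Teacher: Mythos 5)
Your proof is correct and follows essentially the same route as the paper's: part (i) read off \eqref{eq00thm1}, the $r\geq s$ (resp.\ $r=s$) branches read off \eqref{eq000thm1} with $\delta=0$, and the remaining branches settled by falling back on Theorem \ref{theorem2}(iii) and Theorem \ref{theorem3}(ii), where the valuation appears as a sum of nonnegative terms containing $\nu_p(F_{z(p)})\geq 1$ or $\lceil b/2\rceil\geq 1$. The only (cosmetic) difference is that you package the paper's fractional-part estimate as the identity $\left\lfloor \frac{A}{p-1} \right\rfloor - \nu_p(A!) = \left\lfloor \frac{s_p(A)}{p-1} \right\rfloor$, which makes the equivalence with $s_p(A)\geq\frac{a+1}{2}(p-1)$ immediate.
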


	\begin{proof}
		We use Lemmas \ref{Legendre} and \ref{floorlemma} repeatedly without reference. For (i), we obtain by \eqref{eq00thm1} that
		\begin{equation*}
			\nu_p \left( {p^a n \choose n}_F \right) = \frac{s_p (A)}{p - 1} - \frac{a}{2}, \text{ which is positive if and only if $s_p (A) > \frac{a}{2}(p-1)$.}
		\end{equation*}
		This proves (i). To prove (ii) and (iii), we let $\delta$ be as in Theorem \ref{maintheorem1} and divide the consideration into two cases.
		
		\noindent{\bf Case 1.} $p \nmid n$. If $r < s$, then we obtain by Theorem \ref{theorem2}(iii) that $\nu_p \left( {p^a n \choose n}_F \right) \geq \nu_p (F_{z(p)}) \geq 1$. Suppose $r \geq s$. Then $\delta = 0$ and \eqref{eq000thm1} is 
		\begin{equation*}
			\left\lfloor \frac{A}{p - 1} \right\rfloor - \frac{a - 1}{2} - \nu_p (A !) = \left\lfloor \frac{A}{p - 1} \right\rfloor - \frac{a - 1}{2} - \frac{A - s_p (A)}{p-1} = \frac{s_p (A)}{p - 1} - \left\{ \frac{A}{p - 1} \right\} - \frac{a - 1}{2}.
		\end{equation*}
		If $s_p(A) \geq \frac{a+1}{2}(p - 1)$, then \eqref{eq000thm1} implies that
		\begin{equation*}
			\nu_p \left( {p^a n \choose n}_F \right) \geq 1 - \left\{ \frac{A}{p - 1} \right\} > 0.
		\end{equation*}
		Similarly, if $s_p (A) < \frac{a+1}{2}(p-1)$, then $\nu_p \left( {p^a n \choose n}_F \right) < 1 - \left\{ \frac{A}{p-1} \right\} \leq 1$. This proves (ii).
		
		\noindent{\bf Case 2.} $p \mid n$. We write $n = p^b \ell$ where $p \nmid \ell$. Then $b \geq 1$. Recall that $\nu_p (F_{z(p)}) \geq 1$. If $r \neq s$, then Theorem \ref{theorem3} implies that $\nu_p \left( {p^a n \choose n} \right) \geq \frac{b}{2}$ if $b$ is even and it is $\geq \frac{b + 1}{2}$ if $b$ is odd. In any case, $\nu_p \left( {p^a n \choose n}_F \right) \geq 1$. So $p \mid {p^a n \choose n}_F$. If $r = s$, then $\delta = 0$ and we obtain as in Case 1 that $p \mid {p^a n \choose n}_F$ if and only if $s_p (A) \geq \frac{a + 1}{2} (p - 1)$. This proves (iii).
	\end{proof}

	\begin{corollary}
		Let $p \neq 2,5$ be a prime and let $A = \frac{n(p-1)}{p^{\nu_p (n)}z(p)}$. Assume that $p \equiv \pm 1 \pmod{5}$. Then $p \mid {pn \choose n}_F$ if and only if $s_p (A) \geq p-1$.
	\end{corollary}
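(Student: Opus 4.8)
The plan is to specialize Theorem \ref{maintheorem1}(i) to $a = 1$ and then read off the divisibility criterion from the resulting closed form. First I would observe that, since $p \equiv \pm 1 \pmod{5}$, Lemma \ref{lemma1}(ii) gives $z(p) \mid p-1$; together with $p^{\nu_p(n)} \mid n$ this shows that $\frac{n(p-1)}{p^{\nu_p(n)}z(p)}$ is already an integer. Hence the floor in the definition of $A$ appearing in Theorem \ref{maintheorem1} is vacuous when $a = 1$, and that $A$ coincides with the $A = \frac{n(p-1)}{p^{\nu_p(n)}z(p)}$ in the statement of this corollary.

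Next I would apply Theorem \ref{maintheorem1}(i) with $a = 1$, which yields
\begin{equation*}
\nu_p\left({pn \choose n}_F\right) = \frac{s_p(A)}{p-1} - \left\{\frac{n}{p^{\nu_p(n)}z(p)}\right\}.
\end{equation*}
Writing $\ell = n/p^{\nu_p(n)}$ (so $p \nmid \ell$), the fractional part here equals $\left\{\ell/z(p)\right\}$, which by Lemma \ref{floorlemma}(iv) lies in $[0,1)$. The remaining ingredient is the elementary fact that the left-hand side is a nonnegative integer, because ${pn \choose n}_F$ is a positive integer.

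From here the argument is a short dichotomy. If $s_p(A) \geq p-1$, then $\frac{s_p(A)}{p-1} \geq 1 > \left\{\ell/z(p)\right\}$, so $\nu_p\left({pn \choose n}_F\right) > 0$ and therefore $\geq 1$, i.e. $p \mid {pn \choose n}_F$. Conversely, if $s_p(A) \leq p-2$, then $\frac{s_p(A)}{p-1} < 1$, so $\nu_p\left({pn \choose n}_F\right) < 1 - \left\{\ell/z(p)\right\} \leq 1$, which forces $\nu_p\left({pn \choose n}_F\right) = 0$ and hence $p \nmid {pn \choose n}_F$. Combining the two implications gives the claimed equivalence.

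I do not expect a genuine obstacle: the only points that need care are (a) verifying that $A$ is an honest integer so that Theorem \ref{maintheorem1}(i) takes the clean form above (this is exactly where $p \equiv \pm 1 \pmod 5$ enters, via $z(p) \mid p-1$), and (b) using the integrality of the valuation to upgrade the strict inequality $\nu_p > 0$ to $\nu_p \geq 1$ and to collapse the opposite case to $\nu_p = 0$. Everything else is immediate from the cited theorem.
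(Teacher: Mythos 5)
Your proposal is correct and follows essentially the same route as the paper: both specialize Theorem \ref{maintheorem1}(i) to $a=1$, note via Lemma \ref{lemma1}(ii) that $z(p)\mid p-1$ makes $A$ an integer, and then compare $\frac{s_p(A)}{p-1}$ against $1-\left\{\frac{n}{p^{\nu_p(n)}z(p)}\right\}$ using the integrality of the valuation. No gaps.
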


	\begin{proof}
		We remark that by Lemma \ref{lemma1}(ii), $A$ is an integer.  Let $x = \frac{n}{p^{\nu_p (n)}z(p)}$. We apply Theorem \ref{maintheorem1}(i) with $a = 1$. If $s_p (A) \geq p-1$, then \eqref{eq0thm1} implies that $\nu_p \left( {pn \choose n}_F \right) \geq 1 - \{x\} > 0$. If $s_p (A) < p - 1$, then	$\nu_p \left( {pn \choose n}_F \right) < 1 - \{x\} \leq 1$. This completes the proof.
	\end{proof}

	\section*{Acknowledgments}
	
	This research was jointly supported by the Thailand Research Fund and the Faculty of Science Silpakorn University, grant number RSA5980040.

	\medskip
	
	\noindent MSC2010: 11B39, 11B65, 11A63
	
\end{document}